\newtheorem{teo}{Theorem}
\newtheorem{lemma}{Lemma}
\theoremstyle{definition}
\newtheorem{remark}{Remark}
\newcommand{\N}{\mathbb N}
\newcommand{\Q}{\mathbb Q}
\newcommand{\GL}{\textup{GL}}
\renewcommand{\tau}{\Delta}
\newtheorem*{con}{Conjecture}
\begin{document}

\selectlanguage{english}

\title{The complexity of orientable graph manifolds}

\author{Alessia Cattabriga - Michele Mulazzani}


\maketitle

\begin{abstract}
We give an upper bound for the Matveev complexity of the whole class of closed connected orientable prime graph manifolds that is sharp for all 14502  graph manifolds of the Recognizer catalogue (available at  \texttt{http://matlas.math.csu.ru/?page=search}).\\

\noindent MSC2010: Primary  57M27; Secondary 57N10, 57R22.
\end{abstract}

\section{Introduction}

Graph manifolds have been introduced and classified by Waldhausen in \cite{Wa} and \cite{Wa2}. They are defined as compact 3-manifolds  obtained by gluing Seifert fibre spaces  along toric boundary components; so they can be described using  labeled digraphs, as it will be explained in the next section.

Matveev in  \cite{M1} (see also \cite{M2}) introduced the notion of complexity for compact 3-dimensional manifolds,  as a way  to   measure how ``complicated''   a manifold is.  Indeed, for closed irreducible and $\mathbb P^2$-irreducible manifolds the  complexity coincides with the minimum number of tetrahedra needed to construct the manifold, with the only exceptions  of $S^3$, $\mathbb{RP}^3$ and $L(3,1)$, all having complexity zero.  
Moreover, complexity  is additive under connected sum and it is finite-to-one in the closed irreducible case. The last property has been used in order to construct a census of manifolds according to increasing complexity: for the orientable case, up to complexity 12, in the  Recognizer catalogue\footnote{See \texttt{http://matlas.math.csu.ru/?page=search}.}  and for the non-orientable  case, up to complexity 11, in the Regina catalogue\footnote{See \texttt{https://regina-normal.github.io}.} . 

Upper bounds for the complexity of infinite families of 3-manifolds are given in \cite{M0} for lens spaces, in \cite{MP2} for closed orientable Seifert fibre spaces and for orientable torus bundles over the circle, in \cite{FW} for orientable Sefert fibre space with boundary and in \cite{CMMN} for non-orientable compact Seifert fibre spaces. All the previous upper bounds are sharp for  manifolds contained in the above cited catalogues. Furthermore, in \cite{JRT} and \cite{JRT2} it has been proved that the upper bound given in \cite{M0} is sharp for two infinite families of lens spaces.  Very little is known for the complexity of graph manifolds: in \cite{F} and \cite{FO}  upper bounds are given only for the case of graph manifolds obtained by gluing along the boundary two or three Seifert fibre spaces with disk base space and at most two exceptional fibres.

The main goal of this paper is to furnish a potentially sharp upper bound for the complexity of  all  closed connected orientable prime graph manifolds different from  Seifert fibre spaces and  orientable torus bundles over the circle.   It is worth noting that the upper bounds given in Theorems~\ref{regular},  \ref{tree} and \ref{generale} are sharp for all 14502  manifolds of this type included  in the Recognizer catalogue.

The organization  of the paper is the following.  In Section \ref{graph} we recall the definition of graph manifolds and their representation via  decomposition graphs, while in Section \ref{teorema} we state  our results.

We end this section by recalling the definition of complexity and skeletons.

A polyhedron $P$ is said to be \textit{almost simple} if
the link of each point $x\in P$ can be embedded into $K_4$, the
complete graph with four vertices. In particular, the polyhedron is called \textit{simple} if the link is homeomorphic  to  either a circle, or  a circle with a diameter, or  $K_4$. A \textit{true vertex} of an
(almost) simple polyhedron $P$ is  a point $x\in P$ whose link is
homeomorphic to $K_4$.  A \textit{spine} of a closed  connected 3-manifold 
is a  polyhedron $P$ embedded in $M$ such that  $M\setminus P\cong B^3$,  being $B^3$ an open 3-ball. The \textit{complexity} $c(M)$ of $M$ is the minimum number of true vertices among all almost simple spines of $M$.

We will construct a spine for a given graph manifold by gluing skeletons of its Seifert pieces. 
Consider a    compact connected  3-manifold $M$  whose  boundary either is empty or  consists of tori. Following  \cite{MP} and \cite{MP2}, a \textit{skeleton of}  $M$ is a  sub-polyhedron $P$ of $M$ such that (i) $P\cup\partial M$ is simple, (ii) $M\setminus (P\cup \partial M)\cong B^3$, (iii) for any component  $T^2$ of $\partial M$ the intersection $T^2\cap P$ is  a non-trivial theta graph\footnote{A  \textit{non-trivial} theta graph $\theta$ on a torus  $T^2$ is  a subset of $T^2$  homeomorphic to the theta graph (i.e., the graph with 2 vertices and 3 edges joining them),   such that $T^2\setminus \theta$ is an open disk.}. Note that if $M$  is closed then $P$ is a spine of $M$. 
Given two  manifolds  $M_1$ and $M_2$ as above  with non-empty boundary, let $P_i$ be a skeleton of $M_i$, for $i=1,2$. Take two components $T_1\subseteq \partial M_1$ and $T_2\subseteq\partial M_2$ such that $P_i\cap T_i=\theta_i$ and consider an  homeomorphism $\varphi:  (T_1,\theta_1)\to  (T_2,\theta_2)$. Then $P_1\cup_{\varphi}P_2$ is a skeleton for $M_1\cup_{\varphi}M_2$: we call this operation, as well as the manifold $M_1\cup_{\varphi}M_2$, an \textit{assembling} of $M_1$ and $M_2$.

\section{Graph manifolds and their  combinatorial description}

\label{graph}

Let us start by fixing some notations for Seifert fibre spaces. 
We will consider only orientable compact connected Seifert fibre spaces with non-empty boundary, described as  $S=(g,d,(p_1,q_1),\ldots,(p_{r},q_{r}),b)$  where: $g\in\mathbb Z$ coincides with the genus 
of the base space if it is orientable and with the opposite if it is non-orientable,  $d>0$ is the number 
of boundary components of $S$, $(p_j,q_j)$ are lexicographically ordered  pairs of coprime integers such 
that $0<q_j<p_j$, for $j=1,\ldots, r$, describing the type of the exceptional fibres of $S$  and  $b\in\mathbb Z$ 
 can be considered as a (non-exceptional) fibre of type $(1,b)$. 

Up to fibre-preserving homeomorphism, we can assume  (see \cite{FM}) that the Seifert pieces appearing in a graph manifold 
belong to the set $\mathcal S$ of the oriented compact connected Seifert fibre spaces with non-empty 
boundary that are different from  fibred solid tori  and from the fibred spaces $S^1\times S^1\times I$ 
and $N\tilde\times S^1$ (i.e., the orientable circle bundle over the Moebius strip $N$, which will be considered 
with the alternative Seifert fibre structure $(0,1,(2,1),(2,1),b)$).

A Seifert fibre  space $S=(g,d,(p_1,q_1),\ldots,(p_{r},q_{r}),b)\in\mathcal S$, with   base space $B=p(S)$, is equipped with  coordinate systems on the toric boundary components, as follows. 
Let $\widehat S$ be the closed Seifert fibre space $\widehat S=(g,0,(p_1,q_1),\ldots,(p_{r},q_{r}),b)$, with base space \hbox{$\widehat B=\widehat p(\widehat S)$.} Moreover, let $\widehat s:\widehat B'\to \widehat S'$ be the section naturally associated to $\widehat S$, up to isotopy, where $\widehat B'=\widehat p(\widehat S')$ and $\widehat S'$ is obtained from $\widehat S$ by removing $r+1$ open fibred solid tori (with disjoint closures)  which are regular neighborhoods of the exceptional fibres of $\widehat S$ and of the fibre of type $(1,b)$. The manifold $S$ is obtained from $\widehat S$ by removing $d$ open fibred solid tori which are regular neighborhoods of $d$ regular fibres of type $(1,0)$ of $\widehat S$. Therefore, $p=\widehat p_{|_{S}}$ and $\partial(B)\subset \widehat B'$. 
The section $\widehat s$ induces a coordinate system (i.e., a positive basis of the first ho\-mo\-lo\-gy group) $(\mu_h,\lambda_h)$ \label{coordinates} on the $h$-th toric boundary component $T_h$ of $S$, for each $h=1,\ldots, d$, such that: $\mu_h$ is the image under $\widehat s$ of a   boundary component $c'_h$ of $B$ and  $\lambda_h$ is a fibre of $S$. Moreover, $\mu_h$ and $\lambda_h$ are oriented in such a way that their intersection index is 1, with the orientation of $T_h$ induced by the one of $S$. When $B$ is orientable, we require that any $\mu_h$ is oriented according to the orientation of $c'_h$ induced by a fixed orientation on $B$.

Consider a finite connected  non-trivial digraph $G=(V,E,\iota)$, where
\hbox{$V=\{v_i\mid i\in I\}$} is the set of vertices and $E=\{e_j\mid j\in J\}$ is the set of oriented edges of $G$, with incidence structure $\iota:E\to V\times V$ given by $\iota(e_j)=(v_{i'_j},v_{i''_j})$. Let 
$$H=\left(\begin{array}{cc} 0 & 1\\ 1 & 0\end{array}\right) \quad \textup{and}\quad  U=\left(\begin{array}{cc} 1 & 0\\ 1 & 1\end{array}\right),$$
then associate:
\begin{itemize}
 \item to each vertex $v_i\in V$ having degree $d_i$ a Seifert fibre space \linebreak$S_i=(g_i,d_i,(p_1,q_1),\ldots,(p_{r_i},q_{r_i}),b_i)\in \mathcal S$ (i.e., the degree of $v_i$ is equal to the number of components of $\partial S_i$);
 \item  to each edge $e_j\in E$ a matrix $A_j=\left(\begin{array}{cc} \alpha_j & \beta_j\\ \gamma_j & \delta_j\end{array}\right)\in \GL^-_2(\mathbb Z)$ such that $\beta_j\neq 0$ and $0\le \epsilon_j\alpha_j,\epsilon_j\delta_j<|\beta_j|$, where $\epsilon_j=\beta_j/|\beta_j|$. We call \textit{normalized} a matrix of $\GL_2^-(\mathbb Z)$ satisfying these conditions. Moreover:
 \begin{itemize}
 
 \item[(i)]  $A_j\ne \pm H$ when  either $S_{i'_j}$ or  $S_{i''_j}$ is the space $(0,1,(2,1),(2,1),-1)$;
 \item[(ii)]  when $|V|=2$, $|E|=1$ and $S_1=(0,1,(2,1),(2,1),b_1)$, \linebreak $S_2=(0,1,(2,1),(2,1),b_2)$,
 \begin{itemize}
 \item[(a)] if $A=\pm H$ then $(b_1,b_2)\ne (0,0),(-2,-2)$;
 \item[(b)] if $A=\pm \left(\begin{array}{cc} 1 & \beta\\ 1 & \beta-1\end{array}\right)$, with $\beta>1$, then  $(b_1,b_2)\ne (-1,-2)$;
 \item[(c)] if $A=\pm \left(\begin{array}{cc} \beta-1 & \beta\\ 1 & 1\end{array}\right)$, with $\beta>1$, then  $(b_1,b_2)\ne (0,-1)$.
\end{itemize}
\end{itemize}
\end{itemize}

The graph manifold $M$ associated to the above data is obtained by gluing, for each  edge $e_j\in E$  with $\iota(e_j)=(v_{i'_j},v_{i''_j})$, a toric boundary component of $S_{i'_j}$ with one of  $S_{i''_j}$, using the homeomorphism represented by $A_j$ with respect to the fixed coordinate systems on the tori.\footnote{If $\beta_j=0$ for some $A_j$, then the gluing map sends a fiber of $S_{i'_j}$ into a fiber of $S_{i''_j}$. This implies that the decomposition of $M$ in Seifert pieces is not minimal with respect to the number of cutting tori. For the same reason conditions (i) and (ii) hold (see \cite[p. 279]{FM}). Observe that we can assume $\beta_j>0$, for any $j$ such that   $e_j$ belongs to a fixed spanning tree of $G$. } Clearly, $M$ is a closed, orientable and connected graph manifold. On the other hand, each  closed connected  orientable prime graph manifold different from a Seifert fibre space and an orientable torus bundle over the circle  can be obtained in this way (see \cite[\S 11]{FM}). We will call $G$ a \textit{decomposition graph} of $M$.

If $G'=(V, E')$ is a spanning  subgraph of a decomposition graph $G$ we denote by $M_{G'}$ the graph manifold (with boundary if $G'\ne G$) obtained by performing only the attachments corresponding to the elements  of $E'$.

\begin{remark}
\label{regular_matrices}
There is no restriction in assuming that all  matrices associated to the edges of a decomposition graph are normalized: this is because of  the  following two operations that do not change the resulting graph manifold (see \cite[\S 11]{FM} and \cite{VMF}):
 \begin{itemize}
  \item[1)] replacement of the matrix $A_j$ with $A_jU^k$ and of the parameter $b_{i'_j}$ of the  Seifert space  $S_{i'_j}$ with the parameter $b_{i'_j}+k$;
  \item[2)] replacement of the matrix $A_j$ with $U^kA_j$ and of the parameter $b_{i''_j}$ of the Seifert space $S_{i''_j}$  with $b_{i''_j}-k$.
 \end{itemize}
Indeed, given  
 a matrix  $A=\left(\begin{array}{cc} \alpha & \beta\\ \gamma & \delta\end{array}\right)\in\GL^-_2(\mathbb Z)$,   let  $k=-\lfloor\frac{\alpha}{\beta}\rfloor$ and \hbox{$h=-\lfloor\frac{\delta}{\beta}\rfloor$} where $\lfloor x\rfloor$ denotes the floor   of $x$. Then   the matrix $$A'=U^h A   U^k=\left(\begin{array}{cc}\alpha+k\beta&\beta\\ \gamma +h\alpha+k\delta + kh\beta&\delta +h\beta\end{array}\right)$$  is normalized. Note that for a normalized   matrix $A=\left(\begin{array}{cc} \alpha & \beta\\ \gamma & \delta\end{array}\right)$ the fol\-lo\-wing properties hold:
 \begin{itemize}
 \item $\beta\gamma>0$;
 \item if $\beta=\pm 1$ then $A=\beta H$;
  \item if $A\ne \pm H$ then $\beta/\delta>0$.
  
 \end{itemize}
Moreover $A\in \GL^-_2(\mathbb Z)$ is normalized if and only if $-A$ is normalized.
 \end{remark}

\subsection{Theta graphs and Farey triangulation}

Consider the upper half-plane model  of the hyperbolic plane $\mathbb H^2$ and let  $\mathbb F$  be the  ideal Farey triangulation (see \cite{Bo}).  The  vertices  of $\mathbb F$ coincide with the points of $\mathbb Q\cup\{\infty\}\subset \mathbb R\cup \{\infty\}=\partial \mathbb H^2$ and the edges of $\mathbb F$ are geodesics in $\mathbb H^2$ with endpoints the pairs $a/b$, $c/d$ such that $ad-bc=\pm1$,  with $\pm 1/0=\infty$.  Let  $\tau_{\frac ab,\frac cd,\frac ef}$ be the triangle of the Farey triangulation with vertices $a/b,c/d,e/f\in\mathbb Q\cup\{\infty\}$ and  set $\tau_+=\tau_{\infty,0,1}$,  $\tau_-=\tau_{\infty,0,-1}$.

Let $T^2$ be a  torus, it is a well-known fact that the vertex set of $\mathbb F$ is in bijection with the set of slopes (i.e., isotopy classes of non-contractible simple closed curves) on $T^2$ via $a/b\leftrightarrow a\mu+b\lambda$, where  $(\mu,\lambda)$ is  a  fixed basis of $H_1(T^2)$.  This bijection induces a bijection between the set of triangles of the Farey triangulation  and the set $\Theta(T^2)$ of  non-trivial theta graphs  on $T^2$, considered up to isotopy. Indeed, given  $\theta\in\Theta(T^2)$, consider the three slopes  $l_1,l_2,l_3$ on $T^2$ formed by the pairs of edges of $\theta$. The triangle associated to  $\theta$ is  $\tau_{l_1,l_2,l_3}$. Note that this bijection is well defined since  the intersection index of $l_i$ and $l_j$, with $i\ne j$, is always $\pm1$.

 The graph $\mathbb F^*$  dual to $\mathbb F$ is an infinite tree. Given two triangles $\tau$ and $\tau'$ in $\mathbb F$ the distance $d(\tau,\tau')$ between them is the number of edges of the unique simple path joining the vertices $v_{\tau}$ and $v_{\tau'}$ corresponding to $\tau$ and $\tau'$ in $\mathbb F^*$, respectively.   Given two theta graphs $\theta, \theta'\in \Theta(T^2)$ it is possible to pass from one to the other by a sequence of flip moves (see  Figure \ref{flip2}):  the distance on the set of triangles of the Farey triangulation induces a distance  on $\Theta(T^2)$ such that  $d(\theta,\theta')$ turns out to be  the minimal number of flips necessary to pass from $\theta$ to $\theta'$ (see \cite{MP2}).

\begin{figure}[h!]                      
\begin{center}                         
\includegraphics[width=6cm]{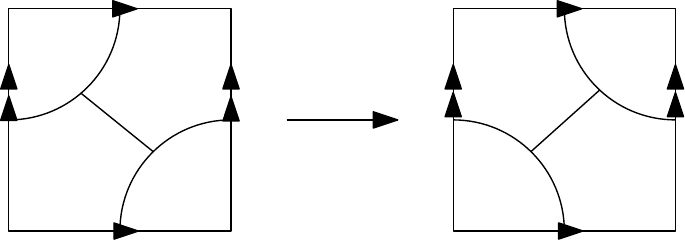}
\caption[legenda elenco figure]{Two theta graphs connected by a flip move.}\label{flip2}
\end{center}
\end{figure}

The group $\GL_2(\mathbb Z)$ acts on $\mathbb  H^2$ as isometries  and $\mathbb F$ is invariant under this action: given a triangle $\tau_{\frac ab,\frac cd,\frac ef}\in\mathbb F$ if we associate to it the matrix 
$$\left(\begin{array}{ccc} a & c& e\\ b& d & f\end{array}\right),$$
then  the group $\GL_2(\mathbb Z)$ acts on the set of triangles of the Farey triangulation by left multiplication.  

The \textit{complexity} $c_A$ of a matrix $A\in \GL_2(\mathbb Z)$ is defined as
$$c_A=\textup{min}\left\{d(A\tau_-,\tau_-), d(A\tau_-,\tau_+), d(A\tau_+,\tau_-), d(A\tau_+,\tau_+)\right\}.$$

Now we state a result about the complexity of nor\-ma\-li\-zed matrices. Let  $S:\Q^+\to\N$ defined by $S(a/b)=a_1+\cdots+a_k$, where  
$$\frac{a}{b}= a_1+\cfrac{1}{\ddots +\cfrac{1}{a_{k-1} +\cfrac{1}{a_k}}},
 $$
is the expansion of the positive rational number $a/b$ as a continued fraction, with $a_1,\ldots,a_k>0$.

\begin{lemma} \label{matrice} Let $A=\left(\begin{array}{cc} \alpha & \beta\\ \gamma & \delta\end{array}\right)\in \GL^-_2(\mathbb Z)$ be a normalized matrix.  
\begin{itemize}
 \item If $A=\pm H$ then $c_A=d(A\tau_-, \tau_-)=d(A\tau_+, \tau_+)=0$.
  \item If $A\ne \pm H$ then $c_A=d(A\tau_-, \tau_+)=S(\beta/\delta)-1$.
\end{itemize}
\end{lemma}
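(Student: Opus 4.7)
My plan is to treat the two cases of the statement separately.

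For $A=\pm H$, I would check directly that $\pm H$ swaps $\infty \leftrightarrow 0$ and fixes $\pm 1$, so both $\tau_+$ and $\tau_-$ are fixed as unordered triples. Hence $d(\pm H\,\tau_-,\tau_-)=d(\pm H\,\tau_+,\tau_+)=0$ and $c_A=0$.

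For $A\neq\pm H$ normalized, I would first reduce to $\beta>0$ (since $-A$ is normalized iff $A$ is and induces the same action on $\mathbb{H}^2$). Normalization together with $\det A=-1$ and $A\neq H$ then forces $0<\alpha,\gamma,\delta<\beta$, and the identity $\beta\gamma=\alpha\delta+1$ yields $\gamma\le\delta$. A direct computation gives
$$A\tau_{\mp}=\tau_{\alpha/\gamma,\ \beta/\delta,\ (\beta\mp\alpha)/(\delta\mp\gamma)},$$
so $A\tau_-$ and $A\tau_+$ share the Farey edge $\alpha/\gamma$--$\beta/\delta$ (an edge of $\mathbb{F}$ since $|\alpha\delta-\beta\gamma|=1$), and are therefore adjacent in $\mathbb{F}^*$.

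To conclude $c_A=d(A\tau_-,\tau_+)$ I would establish two ``geodesic passage'' facts in the tree $\mathbb{F}^*$: (i) the unique geodesic from $\tau_+$ to $A\tau_+$ passes through $A\tau_-$, and (ii) the unique geodesic from $A\tau_-$ to $\tau_-$ passes through $\tau_+$. For (i), the third vertex $(\beta-\alpha)/(\delta-\gamma)$ of $A\tau_-$ either equals $\infty$ (when $\gamma=\delta$) or strictly exceeds $\beta/\delta$, hence lies outside $[\alpha/\gamma,\beta/\delta]$ on the same side as $\infty$, while the third vertex $(\alpha+\beta)/(\gamma+\delta)$ of $A\tau_+$ is the standard Farey mediant lying strictly inside $(\alpha/\gamma,\beta/\delta)$; so $A\tau_-$ and $\tau_+$ lie in the same component of $\mathbb{F}^*$ after removing the edge between $A\tau_-$ and $A\tau_+$. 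For (ii), $\gamma\le\delta$ forces all three vertices of $A\tau_-$ to be non-negative (or $\infty$), so $A\tau_-$ lies in the $\tau_+$-component of $\mathbb{F}^*$ after removing the edge joining $\tau_+$ and $\tau_-$. Combined with $d(A\tau_-,A\tau_+)=d(\tau_+,\tau_-)=1$, these imply that the four relevant distances are $d(A\tau_-,\tau_+)$, $d(A\tau_-,\tau_+)+1$, $d(A\tau_-,\tau_+)+1$, $d(A\tau_-,\tau_+)+2$, so $d(A\tau_-,\tau_+)$ realises the minimum.

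Finally, I would compute $d(A\tau_-,\tau_+)=S(\beta/\delta)-1$ by induction on $S(\beta/\delta)$. In the base case $k=1$, normalization forces $A=\left(\begin{array}{cc}a_1-1 & a_1\\ 1 & 1\end{array}\right)$, so $A\tau_-=\tau_{\infty,a_1-1,a_1}$, and the explicit chain $\tau_+\to\tau_{\infty,1,2}\to\cdots\to\tau_{\infty,a_1-1,a_1}$ is a geodesic of length $a_1-1=S-1$. For $k\ge 2$ the identity $\beta/\delta=(\alpha+(\beta-\alpha))/(\gamma+(\delta-\gamma))$ identifies $\beta/\delta$ as the Farey mediant of the other two vertices of $A\tau_-$ (and as the vertex of strictly largest denominator, since $\gamma<\delta$), so flipping the edge of $A\tau_-$ opposite $\beta/\delta$ yields the unique parent $A'\tau_-$ of $A\tau_-$ on the geodesic toward $\tau_+$, and induction gives $d(A\tau_-,\tau_+)=d(A'\tau_-,\tau_+)+1$. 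The principal obstacle will be to produce an explicit normalized matrix $A'$ with $S(\beta'/\delta')=S(\beta/\delta)-1$: this requires a case analysis based on whether $\delta\ge 2\gamma$ or $\delta<2\gamma$, reflecting the continued-fraction behaviour one level deeper, and yielding respectively $\beta'/\delta'=(\beta-\alpha)/(\delta-\gamma)$ or $\beta'/\delta'=\alpha/\gamma$ together with the remaining entries of $A'$, from which normalization and the $S$-decrement can be checked directly.
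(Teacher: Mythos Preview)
Your argument is correct, and the part showing $c_A=d(A\tau_-,\tau_+)$ is essentially the same as the paper's: the paper proves the chain
\[
0<\frac{\alpha}{\gamma}<\frac{\beta+\alpha}{\gamma+\delta}<\frac{\beta}{\delta}<\frac{\beta-\alpha}{\delta-\gamma}
\]
and reads off from a picture of the Farey tessellation that the geodesic $v_{A\tau_+}\to v_{\tau_-}$ in $\mathbb F^*$ passes through $v_{A\tau_-}$ and then $v_{\tau_+}$ --- exactly your facts (i) and (ii), phrased slightly differently.

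Where you genuinely diverge is in computing the value $d(A\tau_-,\tau_+)$. The paper does \emph{not} do this directly: it observes that $A\tau_-$ is the triangle of $\mathcal D_{\beta/\delta}$ (those with a vertex at $\beta/\delta$) closest to $\tau_+$, and then invokes \cite[Lemma~4.3]{MP}, which states that this minimal distance equals $S(\beta/\delta)-1$. You instead re-prove this continued-fraction fact from scratch by induction on $S(\beta/\delta)$, using that $\beta/\delta$ is the Farey mediant (largest denominator) of the three vertices of $A\tau_-$, so that flipping the opposite edge gives the parent toward $\tau_+$; your case split $\delta\gtrless 2\gamma$ correctly tracks which of $\alpha/\gamma$ and $(\beta-\alpha)/(\delta-\gamma)$ becomes the new ``largest-denominator'' vertex. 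This makes your proof self-contained at the cost of some extra bookkeeping, whereas the paper's version is shorter but relies on the external reference.
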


\begin{proof}
The first statement is straightforward since $\pm H\tau_{\pm}=\tau_{\pm}$. To prove the second one let $A=\left(\begin{array}{cc} \alpha & \beta\\ \gamma & \delta\end{array}\right)\ne\pm H$ and so $|\beta|>1$ (see Remark \ref{regular_matrices}). Denote by $\mathcal D_{\beta/\delta}$ the set of triangles of the Farey triangulation having a vertex in $\beta/\delta$.  By \cite[Lemma 4.3]{MP} we have    $\textup{min}\{d(\tau,\tau_+)\mid \tau\in\mathcal D_{\beta/\delta}\}=S(\beta/\delta) -1$. If $A$ is normalized then: $$0<\frac{\alpha}{\gamma}<\frac{\beta+\alpha}{\gamma+\delta}<\frac{\beta}{\delta}<\frac{\beta-\alpha}{\delta-\gamma}.$$ Indeed, since $\alpha\delta-\beta\gamma=-1$, we have:
$$\frac{\alpha}{\gamma}<\frac{\alpha}{\gamma}+\frac{1}{\delta\gamma}=\frac{\alpha\delta+1}{\delta\gamma}=\frac{\beta}{\delta}.$$
So,
$$
 0<\frac{\alpha}{\gamma}=\frac{\frac{\alpha}{\gamma}+\frac{\alpha}{\delta}}{1+\frac{\gamma}{\delta}}<
 \frac{\frac{\beta}{\delta}+\frac{\alpha}{\delta}}{1+\frac{\gamma}{\delta}}=\frac{\beta+\alpha}{\delta+\gamma}=\frac{\frac{\beta}{\gamma}+\frac{\alpha}{\gamma}}{\frac{\delta}{\gamma}+1}<\frac{\frac{\beta}{\gamma}+\frac{\beta}{\delta}}{\frac{\delta}{\gamma}+1}=\frac{\beta}{\delta}=\frac{\frac{\beta}{\gamma}-\frac{\beta}{\delta}}{\frac{\delta}{\gamma}-1}<\frac{\frac{\beta}{\gamma}-\frac{\alpha}{\gamma}}{\frac{\delta}{\gamma}-1}=\frac{\beta-\alpha}{\delta-\gamma},$$
where we suppose $\delta-\gamma \ne 0$, otherwise the last inequality is straightforward.
Clearly $A\tau_-=\tau_{\frac{\alpha}{\gamma},\frac{\beta}{\delta},\frac{\beta-\alpha}{\gamma-\delta}}$, $A\tau_+=\tau_{\frac{\alpha}{\gamma},\frac{\beta}{\delta},\frac{\beta+\alpha}{\gamma+\delta}}$ and the relative position of the triangles is represented in  Figure \ref{farey}, where for convenience we use the Poincar\'e  disk model of $\mathbb H^2$.  All  triangles of $\mathcal D_{\beta/\delta}$ different from  $A\tau_-$ and $A\tau_+$ are contained in the two hyperbolic half-planes  depicted in gray. As a consequence,  we have $\textup{min}\{d(\tau,\tau_+)\mid \tau\in\mathcal D_{\beta/\delta}\}=d\left(A\tau_-,\tau_+\right)$. Since  the path  in $\mathbb F^*$ going from $v_{A\tau_{+}}$ to $v_{\tau_-}$  contains $v_{A\tau_{-}}$ and  $v_{\tau_+}$, we have $c_A=d(A\tau_{-},\tau_+)=S(\beta/\delta)-1$.   
\end{proof}

We end this section by introducing a notation to distinguish edges corresponding to $\pm H$ from the others. Given a  decomposition graph $(V,E,\iota)$ with $V=\{v_i\mid i\in I\}$ and $E=\{e_j\mid j\in J\}$, let  $E'=\{e_j\mid A_j=\pm H\}$, $E''=E-E'$ and $J'=\{j\in J\mid e_j\in E'\}$, $J''=J-J'$. Moreover, set $d^+_i=|\{j\in J''\mid i'_j=i\}|$ and $d^-_i=|\{j\in J''\mid i''_j=i\}|$. Observe that, if $d^0_i=|\{j\in J'\mid i'_j=i\}|+|\{j\in J'\mid i''_j=i\}|$ then $d_i=d^+_i+d^-_i+d^0_i$.

\begin{figure}[h!]                      
\begin{center}                         
\includegraphics[width=8cm]{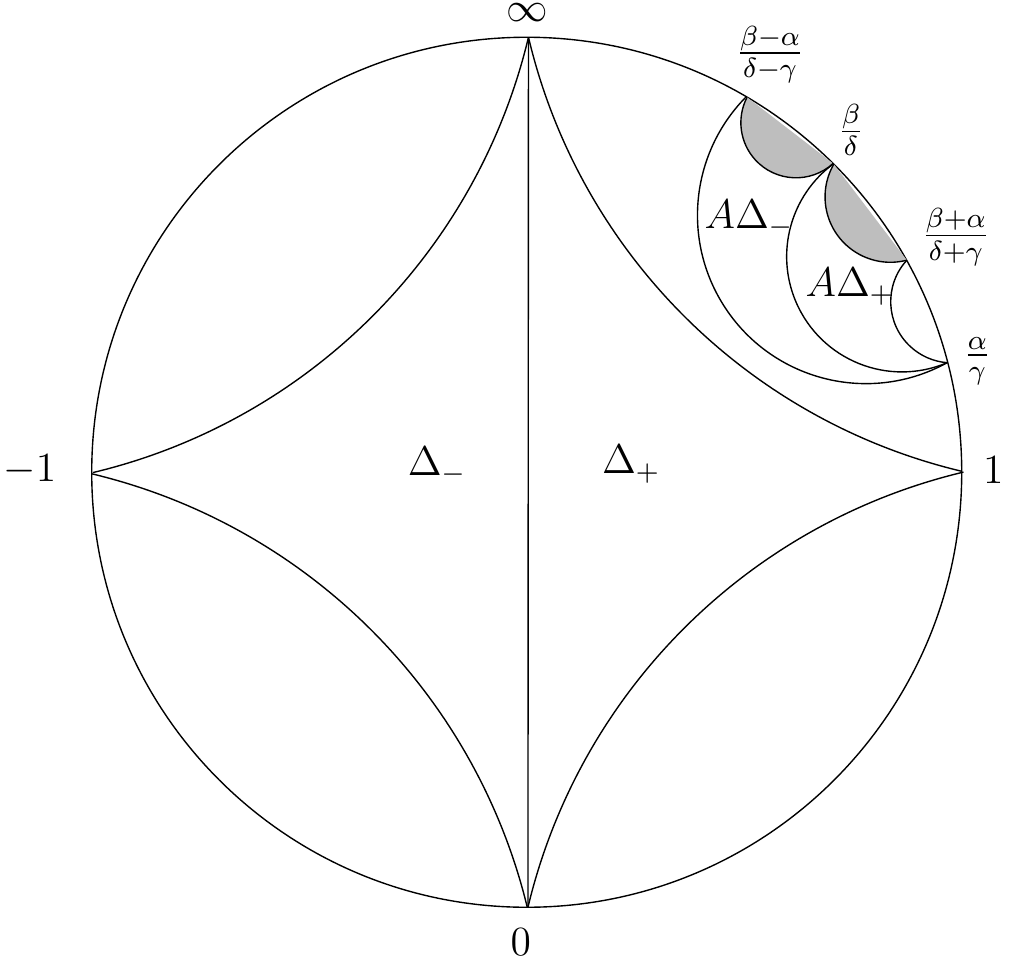}
\caption[legenda elenco figure]{The Farey triangulation in the Poincar\'e disk model.}\label{farey}
\end{center}
\end{figure}

\section{Complexity upper bounds}
\label{teorema}

In this section we provide an upper bound for the complexity of graph manifolds. Before stating the general result (Theorem \ref{generale}), we deal with two special classes of graph manifolds: the first one  (Theorem \ref{regular}) is the class of manifolds having decomposition graphs in which all the edges are associated to matrices different from $\pm H$, that is $J'=\emptyset$, while the second one  (Theorem \ref{tree}) concerns manifolds whose decomposition graphs admit a spanning tree containing all  the edges associated to matrices of type $\pm H$.

In all cases we obtain a spine for a graph manifold starting from skeletons of its Seifert pieces.
The construction of these skeletons  is essentially the one described in  \cite{CMMN}, specialized to our case (i.e., orientable Seifert manifolds) and adapted to take care of the fact that the boundary components of the Seifert pieces will be glued together to obtain a closed graph manifold. Anyway, for the sake of the reader we recall  how to construct a  skeleton  for   the Seifert pieces. The  construction and the number of true vertices of the resulting skeletons depend on some choices: we will discuss in the proof of the theorems  how to fix them in order to minimize the number of true vertices of the  spine.  \\

\textbf{Skeletons of  Seifert pieces}
Let   $S=(g,d,(p_1,q_1),\ldots,(p_r,q_r),b)\in\mathcal S$. Denote with  $S_0=(g,d,(p_1,q_1),\ldots,(p_r,q_r),0)$ and let    $S_0'=(g,d+r+1,0)$   be the space obtained from $S_0$ by removing $r+1$  open fibred solid tori (with disjoint closures) which are regular neighborhoods of the exceptional fibres of $S_0$ and of a regular  fibre of type $(1,0)$ contained in $\textup{int}(S_0)$. Then $S_0\setminus\textup{int}(S'_0)=\Phi_0'\sqcup\Phi_1\sqcup\cdots \sqcup \Phi_r$, where $\Phi_k$ (resp.  $\Phi_0'$)  is a closed solid torus having the $k$-th exceptional fibre (resp. a regular fibre) as core.    Let $p_0:S_0\to B_0$ and $p:S\to B$ be the projection maps and set $s=d+r$. 
Note that if $g\geq 0$  (resp. $g<
0$) then $B'_0=p_0(S'_0)$ is a disk with $2g+s$ orientable (resp. $-g$ non-orientable and $s$ orientable) handles attached. We set  $h=2g$ if $g\geq 0$ and $h=-g$ if $g<0$.

Let $D=p_0(\Phi_0')$ and let $A_0$ be the union of the disjoint arcs properly embedded in $B'_0$ depicted by thick lines in  Figure~\ref{fig_main3}. 
Then $A_0$ is non-empty and  is composed by $h$ edges with both endpoints in $\partial D$  and $s$ edges with an endpoint in $\partial D$ and the other one in a different  component of $\partial B'_0$. By construction, $B'_0\setminus\left(A_0\cup\partial B'_0\right)$ is homeomorphic to an open disk and  the  number of points of $A_0$ belonging to $\partial D$ is at least three, since the conditions on the class $\mathcal S$ ensure that $s+2h> 2$.

\begin{figure}[h!]                      
\begin{center}                         
\includegraphics[width=7cm]{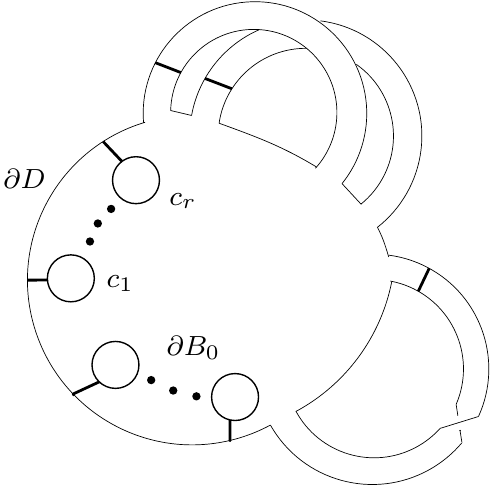}
\caption[legenda elenco figure]{The set $A_0\subset B'_0\setminus\textup{int}(D)$, with $c_k=p_0(\partial \Phi_k)$.}\label{fig_main3}
\end{center}
\end{figure}

Let $s_0':B'_0\to S'_0$ be a section of $p_0$ restricted to $S'_0$. If $b\ne 0$, it is convenient to replace the fibre of type $(1,b)$ with $|b|$ fibres of type $(1,\textup{sign}(b))$. In this way the manifold  $S$ is obtained from  $S_0$   by removing $\vert b\vert$ open   trivially fibred solid tori (with disjoint closures) $\textup{int}(\Phi'_1),\ldots, \textup{int}(\Phi'_{|b|})$, each being a fiber-neighborhood of  regular fibres $\phi_1,\ldots\phi_{|b|}$ contained in $\textup{int}(S_0)$, and by attaching back $|b|$  solid tori $D^2\times S^1$  via  homeomorphisms \hbox{$\psi_l:\partial(D^2\times S^1) \to \partial \Phi'_l$}  such that $\psi_l(\partial D^2\times \{*\})$ is  a curve of type $(1,\textrm{sign}(b))$ on $\partial \Phi'_l$, with respect to a positive  basis  $(\mu_l,\lambda_l)$  of $H_1(\partial\Phi_l')$, where $\mu_l=s_0'(p_0(\partial \Phi'_l))$ and $\lambda_l$ is the fibre over a point  $*'\in p_0(\partial\Phi'_l)$, for $l=1,\ldots,\vert b\vert$. Referring  to Figure \ref{quattro}, it is convenient to take the fibre $\phi_l$ corresponding to an internal point $Q_l$ of $A_0$ and suppose that $p_0(\Phi'_l)$ is a ``small'' disk intersecting the component $\delta_l$ of $A_0$ containing $Q_l$ in an interval and  being disjoint from $\partial B'_0$ and from the other components of $A_0$.  In this way $\delta_l\setminus \textrm{int}(p_0(\Phi'_l))$ is the disjoint union of two arcs $\delta'_l$ and $\delta''_l$. Let $A=A_0\setminus\cup_{l=1}^{\vert b\vert} \textrm{int}(p_0(\Phi'_l))$ and note that $p$ and $p_0$ coincide on $S_0\setminus \cup_{l=1}^{|b|}\textrm{int}(\Phi'_l)$.

\begin{figure}[h!]                      
\begin{center}                         
\includegraphics[width=9cm]{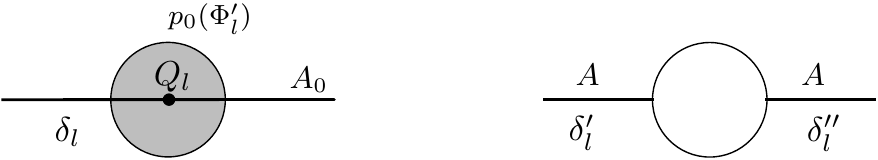}
\caption[legenda elenco figure]{The set $A\subset B'_0$.}\label{quattro}
\end{center}
\end{figure}

Let  $\bar s: \left( B'_0\setminus\left(\cup_{l=1}^{|b|}\textup{int}(p_0(\Phi'_l))\right)\right)\cup D\to S$   be a section of $p_{|_{\textup{Im}(\bar s)}}$ and consider   the polyhedron \hbox{$P=\textup{Im}(\bar s)     \cup p^{-1}(A)\cup \partial \Phi'_0\cup_{l=1}^{\vert b\vert}\left( \partial\Phi_l'\cup_{\psi_l}\left(D^2\times \{*\}\right)\right)\subset S$.}    As represented in the central picture of Figure \ref{freedom}, the set  $\textup{int}(\bar s(A))$ is a collection of quadruple lines in the  polyhedron (the link of each point is homeomorphic to a graph with two vertices and four edges connecting them), and a similar phenomenon occurs for $\bar s(\partial D\setminus A)$.
Therefore we change the polyhedron $P$  performing  ``small'' shifts by moving in parallel the disk $\bar s(D)$ along  the fibration and the components of $p^{-1}(A)$ as depicted  in  the left and right pictures of Figure~\ref{freedom}.  It is convenient to think the shifts of $p^{-1}(A)$ as performed on the components of $A$. Moreover,  the shifts on $\delta'_l$ and $\delta_l''$ can be chosen independently.

\begin{figure}[h!]                      
\begin{center}                         
\includegraphics[width=13.5cm]{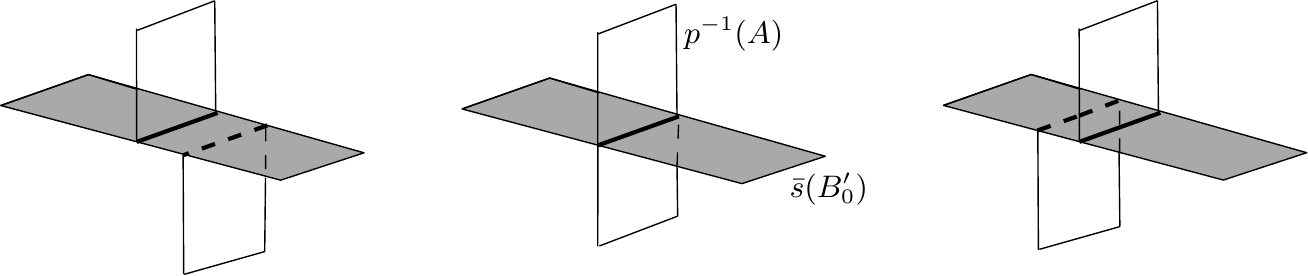}
\caption[legenda elenco figure]{The two possible shifts on a component of $p^{-1}(A)$.}\label{freedom}
\end{center}
\end{figure}

As shown by the pictures, the shift of any component of  $p^{-1}(A)$ may be performed in two different ways that  are not usually equivalent in term of complexity of the final spine. On the contrary, the two possible parallel shifts for $\bar s(D)$ are equivalent as it is evident from Figure~\ref{alette}, which represents the torus $\partial\Phi'_0$.  Let 
$$P'=\bar s\left( B_0'\setminus\left(\cup_{l=1}^{|b|}\textup{int}(p_0(\Phi'_l))\right)\right)\cup D'\cup W' \cup \partial\Phi'_0\cup_{l=1}^{|b|}\left( \partial\Phi'_l\cup_{\psi_l}\left(D^2\times \{*\}\right)\right)$$ 
be the polyhedron obtained from $P$ after the shifts, where $D'$ and $W'$ are the results of the shifts of  $\bar s(D)$ and  $p^{-1}(A)$, respectively.

It is easy to see that $P'\cup\partial S\cup_{k=1}^r \partial \Phi_k$ is simple, $P'$ intersects each component of $\partial S$ and each torus $\partial  \Phi_k$  in a non-trivial theta graph and  the manifold  $S\setminus\left(P'\cup\partial S\cup_{k=1}^r  \Phi_k\right)$ is the disjoint union of $\vert b\vert +2$  open balls. So in order to obtain a skeleton $P''$  for  $S\setminus \left(\cup_{k=1}^r \textup{int}(\Phi_k)\right)$ it is enough to remove a suitable open 2-cell from the torus $T_0=\partial \Phi'_0$ and one from each torus $T_l=\partial\Phi'_l$, for $l=1,\ldots,\vert b\vert$, connecting in this way the balls.

The graph  $\Gamma_l=T_l \cap  \left(\bar s\left( B'_0\setminus \textup{int}(p_0(\Phi'_l))\right)\cup W'\cup \psi_l(\partial D^2\times\{*\})\right)$ (resp. $\Gamma_0=T_0\cap \left(\bar s\left( B'_0 \right)\cup D' \cup W'\right)$)   is cellularly embedded in $T_l$ (resp. $T_0$)  and its  vertices with  degree greater than 2 are true vertices of $P'\cup \partial S\cup_{k=1}^r\partial \Phi_k$:  we will remove the region $R_l$ (resp. $R_0$) of $T_l\setminus \Gamma_l$ (resp. $T_0\setminus \Gamma_0$) having in the boundary the greatest number of vertices of $\Gamma_l$, for $l=1,\ldots,\vert b\vert$  (resp. $\Gamma_0$).

Referring to Figure \ref{alette},  the graph $\Gamma_0$ is composed by two horizontal parallel loops  $\xi=\partial (\bar s(D))$ and $\xi'=\partial D'$, and an arc with both  endpoints on  $\xi$ for each boundary point of $A$ belonging to $\partial D$.  Changing the shift of a component of $A$  has the same effect as performing  a  symmetry along $\xi$ of the correspondent arc(s). A region of $T_0\setminus \Gamma_0$ has 4 or 6 vertices   when the non-horizontal arcs belonging to its boundary are not parallel  or   5  vertices otherwise. So, except the case in which all the arcs are parallel there is always  a region with 6 vertices.

\begin{figure}[h!]                      
\begin{center}                         
\includegraphics[width=10cm]{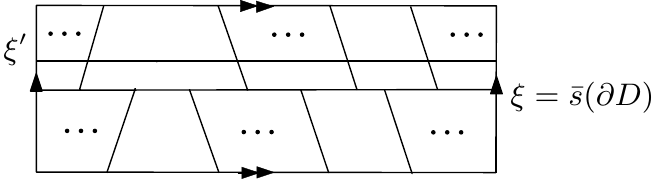}
\caption[legenda elenco figure]{A fragment of the graph $\Gamma_0$ embedded in $\partial \Phi'_0$.}\label{alette}
\end{center}
\end{figure}

When $b\ne 0$, the graph $\Gamma_l$, for $l=1,\ldots, |b|$, is depicted in Figure~\ref{C}  (resp. Figure~\ref{C2})  for a fibre  of type $(1,1)$  (resp. $(1,-1)$), just labeled by  $+$  \hbox{(resp. $-$)} inside the disk.  If we take for $\delta'_l$ and $\delta''_l$ the shifts induced by that of $\delta_l$, then we can choose as region $R_l$ the gray one, containing in its boundary all  vertices  of $\Gamma_l$  belonging to $\partial \Phi_l'$  except one (the thick points in the first two pictures). On the contrary, if one of the two shifts is changed as in the third draw of Figures~\ref{C} and  \ref{C2}, then $R_l$ can be chosen containing in its boundary all the  vertices of  $\Gamma_l$ belonging to $\partial \Phi_l'$.

\begin{figure}[h!]                      
\begin{center}                         
\includegraphics[width=12cm]{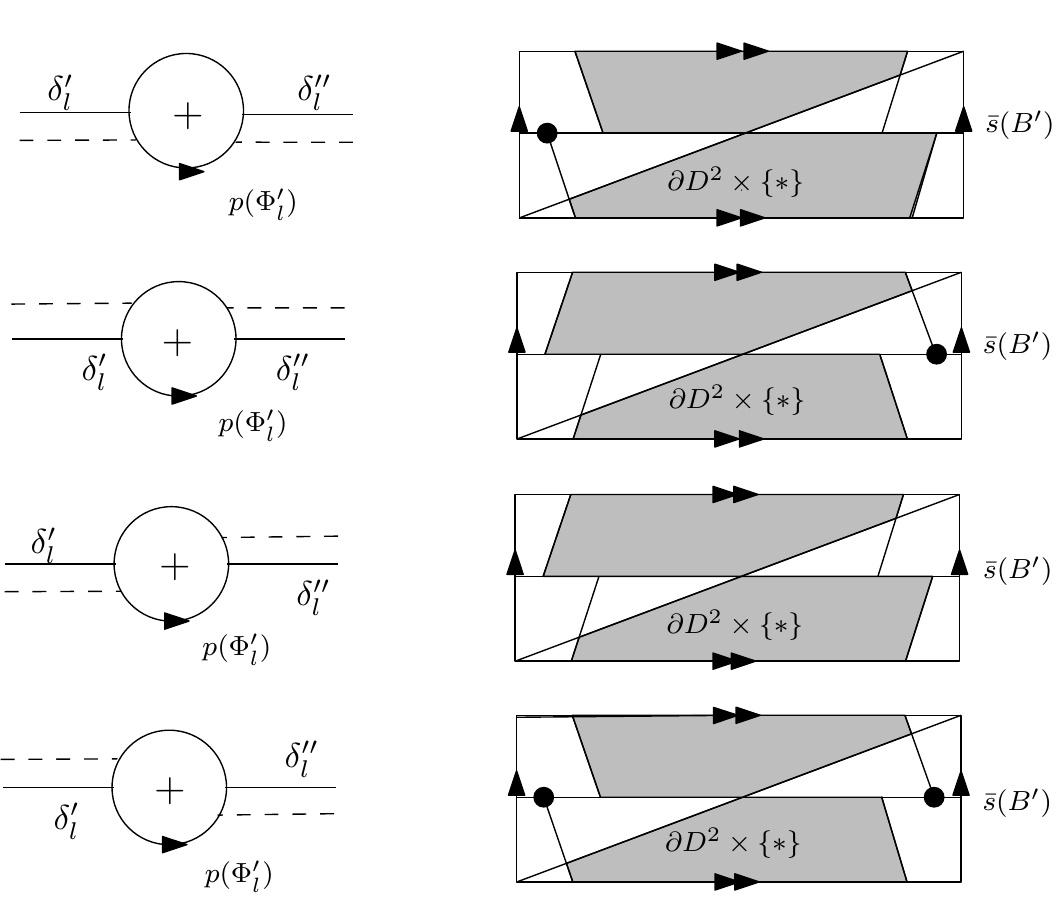}
\caption[legenda elenco figure]{The graph $\Gamma_l$, with $b>0$, embedded in  $T_l=\partial \Phi_l'$ with different choices of the shifts for $\delta'_l$ and $\delta''_l$.}\label{C}
\end{center}
\end{figure}

\begin{figure}[h!]                      
\begin{center}                         
\includegraphics[width=12cm]{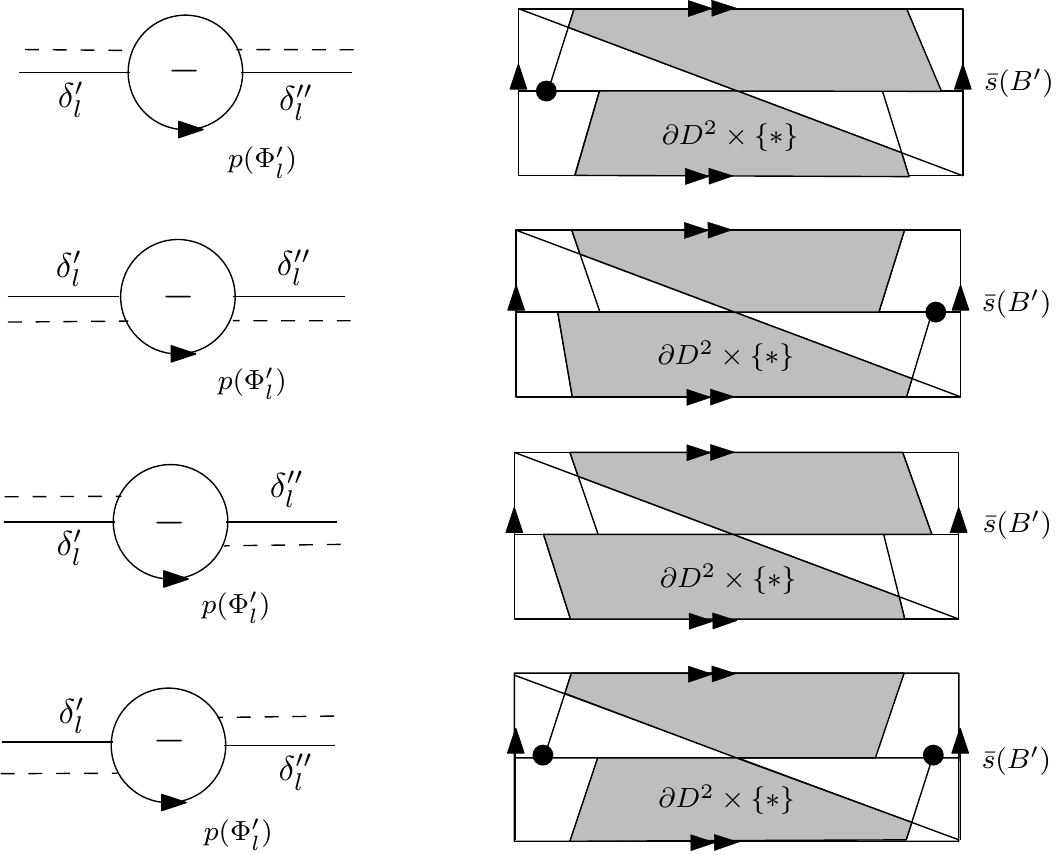}
\caption[legenda elenco figure]{The graph $\Gamma_l$, with $b<0$ embedded in  $T_l=\partial \Phi_l'$ with different choices of the shifts for $\delta'_l$ and $\delta''_l$.}\label{C2}
\end{center}
\end{figure}

We remark that  changing  the shift of a component of $A$   changes the intersection between the corresponding element of $W'$ and $\partial S$ (which is a non-trivial theta graph) by a flip move (see  Figure \ref{flip2}). We denote with $P''$ the skeleton obtained by removing the regions $R_0$ and $R_l$ from $P'$, for $l=1,\ldots,|b|$.

In order to construct a  skeleton for $\Phi_k$, for $k=1,\ldots, r$, consider the skeleton $P_F$ depicted in Figure \ref{flip}: it is a skeleton for $T^2\times [0,1]$ with one true vertex  and such that $\theta_0=P_F\cap (T^2\times \{0\})$ (the graph in the upper face) and $\theta_1=P_F\cap (T^2\times \{1\})$ (the graph in the bottom face) are two theta graphs differing for a flip move. Denote with $\Theta_{p_k/q_k}$ the subset of $\Theta(T^2)$, consisting of the theta graphs containing the slope corresponding to $p_k/q_k\in\mathbb Q\cup\{\infty\}$.  Let $\theta_{p_k/q_k}$ be the theta graph in $\Theta_{p_k/q_k}$ that is closest  to $\theta_+$.  The skeleton $X_k$ for $\Phi_k$ is  obtained by assembling several skeletons of type $P_F$  connecting the theta graph $P''\cap \Phi_k$ to a theta graph which is  one step closer to $\theta_+$ than  $\theta_{p_j/q_j}$,with respect to the distance on $\Theta(T^2)$ (see \cite{FW}).   The number of the required flips is either $S(p_j,q_j)-2$ or $S(p_j,q_j)-1$  depending on the  shift chosen for the corresponding component  of $A$ used in the construction of the skeleton  $P''$. We call the shift  {\it regular} in the first case and  {\it singular}  in the second one (see Figure \ref{A}).

The skeleton $P_{S}$  of $S$ is obtained by assembling $P''$ with $X_k$, via the identity, for $k=1,\ldots,r$.  
\begin{figure}[h!]                      
\begin{center}                         
\includegraphics[width=4cm]{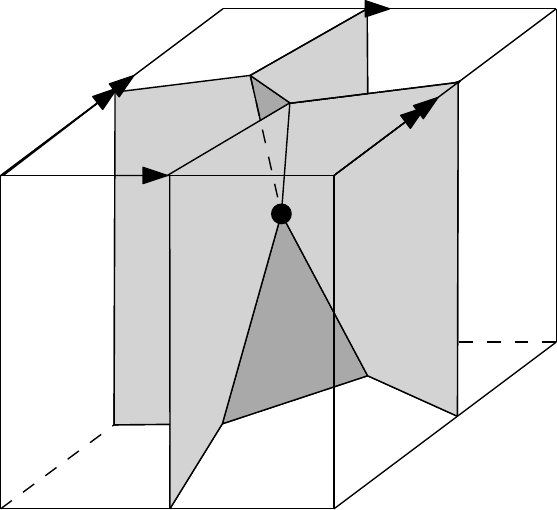}
\caption[legenda elenco figure]{A skeleton for $T^2\times [0,1]$ connecting two theta graphs differing by a flip move.}\label{flip}
\end{center}
\end{figure}

\begin{figure}[h!]                      
\begin{center}                         
\includegraphics[width=5cm]{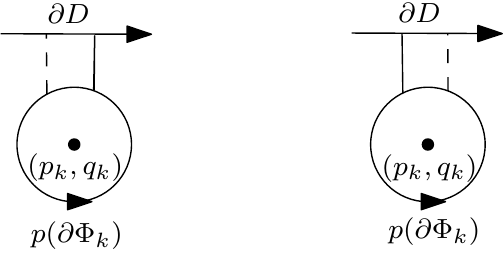}
\caption[legenda elenco figure]{Regular shift (on the left) and singular shift (on the right).}\label{A}
\end{center}
\end{figure}

We end this section by introducing a function that will be useful in the following: let
 $f_{m,M}:\mathbb Z\to\mathbb N$ be defined by 
$$f_{m,M}(b)=\left\{\begin{array}{ll} m-b & \textup{ if }  b<m\\ 0 &\textup{ if }m\leq b\leq M\\ b-M & \textup{ if }  b>M  \end{array}\right.,$$ for $m,M\in \mathbb Z$, $m< M$, $m\leq 1$ and $M\geq-1$ (see the graph in Figure \ref{function}).\\

\begin{figure}[h!]                      
\begin{center}                         
\includegraphics[width=10cm]{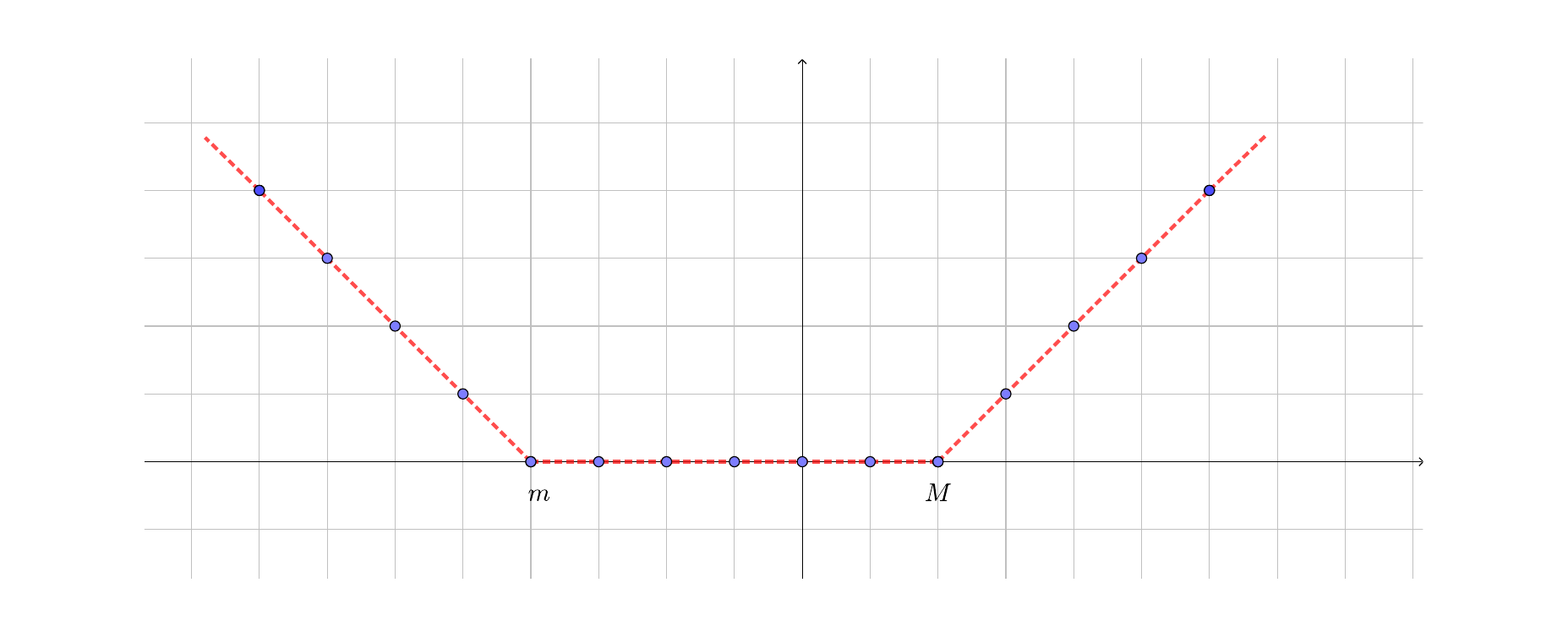}
\caption[legenda elenco figure]{The graph of the function $f_{m,M}$.}\label{function}
\end{center}
\end{figure}

Moreover, let 
${\cal T}_G=\{T=(V,E_T)\mid T \textrm{ is a spanning tree of }G\}$ and for $T\in \mathcal T_G$ set  $J_T=\{j\in J\mid e_j\in E_T\}$, $E'_T=E_T\cap E'$, $E''_T=E_T\setminus E'_T$, $J'_T=\{j\in J\mid e_j\in E'_T\}$, and \hbox{$J''_T=J_T\setminus J'_T$.}

\subsection{The case  $\mathbf{J'=\emptyset}$}

Before stating our first result, we need to discuss how to fix the choices in the construction of the skeleton $P_{S}$ previously described, when the Seifert fibre space $S=(g,d,(p_1,q_1),\ldots,(p_r,q_r),b)$  is  a piece of  a graph manifold having all gluing matrices  different from $\pm H$.  According to the notations introduced at the end of Section  \ref{graph}, we have $d=d^++d^-$ since $J'=\emptyset$. 

\begin{remark}
\label{ottimale}
 Let $\theta_+$ and $\theta_-$ be the theta graphs corresponding, respectively,  to $\tau_+$ and $\tau_-$ in the Farey triangulation. 
  The intersection of each  boundary component of $S$  with the skeleton $P_{S}$ is either $\theta_+$ or $\theta_-$, depending whether the shift of the corresponding component $\delta$ of $A$ has been chosen as depicted in the left or right part of Figure \ref{A2}, respectively.

\begin{figure}[h!]                      
\begin{center}                         
\includegraphics[width=5cm]{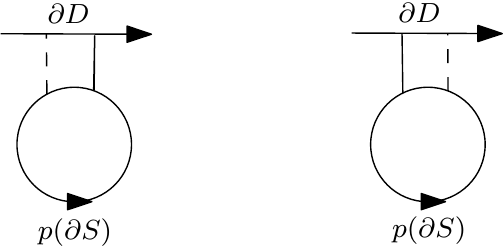}
\caption[legenda elenco figure]{The two possible choices for the shift corresponding to components of $\partial S$.}\label{A2}
\end{center}
\end{figure}

  We always  choose these shifts such  that exactly $d^+$ (resp. $d^-$) components have $\theta_-$ (resp. $\theta_+$) as intersection with $P_{S}$.  Suppose that  $m\leq b\leq M$, where $m=-r-h-d^-+1$ and $M=h+d^+-1$. If $b\leq -1$ we can  choose $p_0(\Phi'_1),\ldots, p_0(\Phi_{|b|}')$ as $|b|$ disks between those marked with $-$  in Figure \ref{negativo}.  In this way:   (i) we can remove a region from $T_0$ containing 6  vertices of $\Gamma_0$, (ii) we can remove from $T_l$ a region   $R_l$  containing in its boundary all  the  vertices of the graph  $\Gamma_l$  (as in the third draw of Figure~\ref{C2}), for each $l=1,\ldots, |b|$,  and (iii) we can take all regular  shifts in the skeletons  $X_k$ corresponding to the exceptional fibres, for $k=1,\ldots, r$.  If $b=0$ we do not have to remove any regular neighborhood $\Phi'_l$ of regular fibres but still (i) and (iii) hold.   An analogue situation happens if $b\geq 1$, but in this case in order to satisfy (i), (ii) and (iii) the  fibres of type $(1,1)$ correspond  to some of the disks marked with $+$ in  Figure \ref{positivo}.  As  a result, when $m\leq b\leq M$ the  polyhedron  $P_S$ has $3(d+r+2h-2)+\sum_{k=1}^{r}(S(p_k/q_k)-2)$ true vertices.

  If  $b<m\leq 0$ (resp. $b>M\geq 0$) (i) and (iii) hold and  there are exactly   $m-b$ (resp. $b-M$) tori in which we remove a region $R_l$ containing in its boundary all  the vertices of $\Gamma_l$  except one (see the first two pictures of Figure \ref{C} and \ref{C2}). Finally, if either $b< m=1$ or $b>M=-1$ then (i) does not hold so we remove a region from $T_0$ containing 5  vertices of $\Gamma_0$. Moreover, 
  there are exactly  $|b|$   torus in which we  remove a region $R_l$ containing  all the vertices of $\Gamma_l$ except one and (iii) holds. Summing up, if  $b<m$ (resp. $b>M$) then  the number of true vertices of $P_S$ increases by $m-b$ (resp. $b-M$) with respect to the case $m\leq b\leq M$. 
  
  As a consequence $P_S$ has $3(d+r+2h-2)+\sum_{k=1}^{r}(S(p_k/q_k)-2)+f_{m,M}(b)$ true vertices.
\end{remark}

\begin{figure}[h!]                      
\begin{center}                         
\includegraphics[width=12cm]{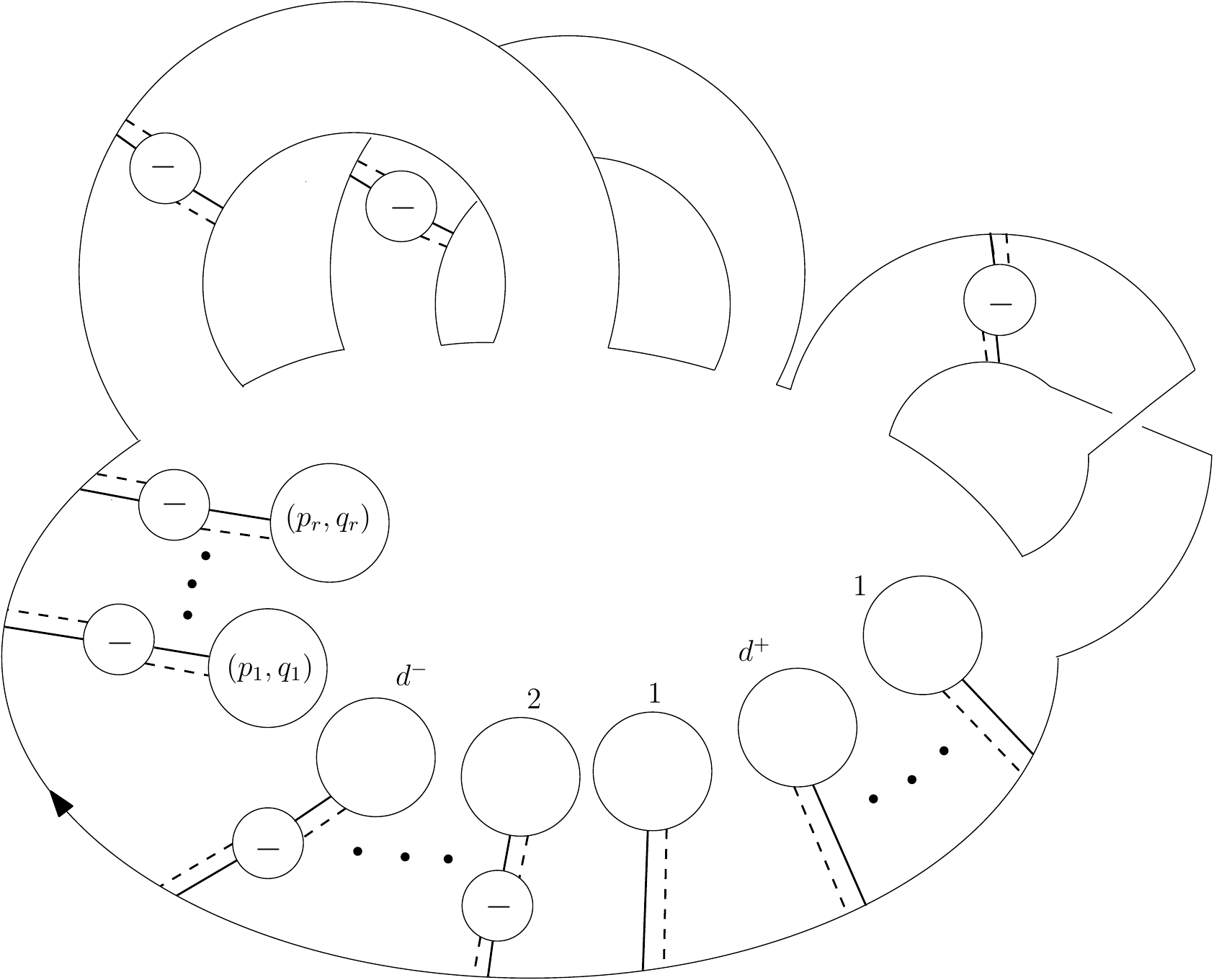}
\caption[legenda elenco figure]{An optimal choice for the shifts corresponding to $(1,-1)$-fibres, when $b=m\leq0$.}\label{negativo}
\end{center}
\end{figure}

\begin{figure}[h!]                      
\begin{center}                         
\includegraphics[width=12cm]{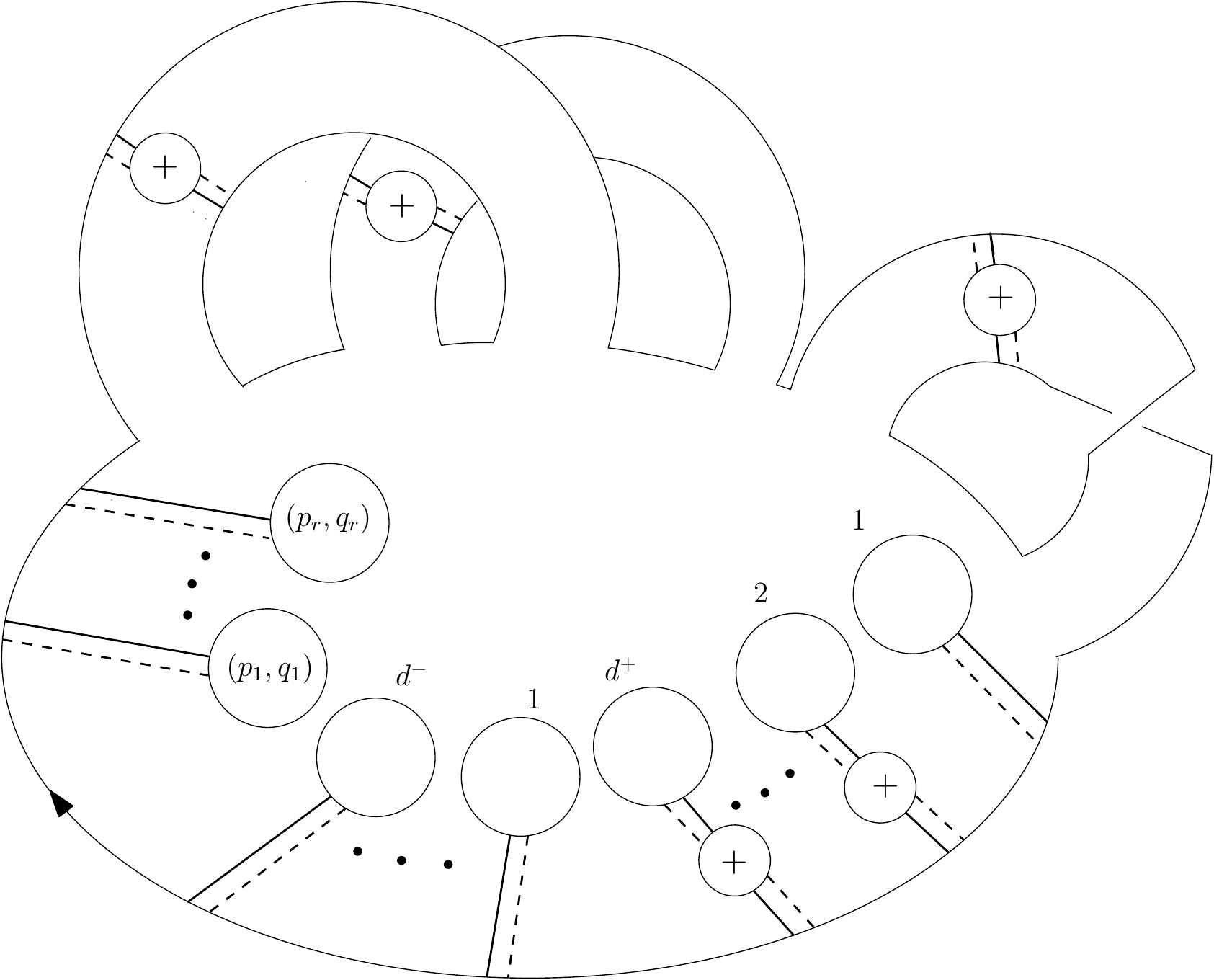}
\caption[legenda elenco figure]{An optimal choice for the shifts corresponding to  $(1,1)$-fibres, when $b=M\geq0$.}\label{positivo}
\end{center}
\end{figure}

\begin{teo} \label{regular} Let $M$ be a graph manifold associated to a decomposition 
graph $G=(V,E,\iota)$ such that  $A_j\neq\pm H$ for any $j\in J$ (i.e., $J'=\emptyset$),  then
\begin{eqnarray*}
 c(M)&\le& 5(|E|-|V|+1)+\sum_{j\in J}(S(\beta_j/\delta_j)-1)+\\
 &\ &+\sum_{i\in I}\left(3(d_i+r_i+2h_i-2)+\sum_{k=1}^{r_i}(S(p_k/q_k)-2)+f_{m_i,M_i}(b_i)\right),
\end{eqnarray*}
where $m_i=-r_i-h_i-d^-_i+1$ and $M_i=h_i+d^+_i-1$.
\end{teo}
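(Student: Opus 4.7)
The plan is to build a spine of $M$ by assembling optimally chosen skeletons of the Seifert pieces along flip-skeleton sequences realizing the gluing matrices, treating spanning-tree edges and non-tree edges of the decomposition graph separately. The true-vertex count will decompose as a sum of contributions from the Seifert pieces, from the flip sequences along each edge, and from an extra closure cost paid once per non-tree edge.

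First I would fix a spanning tree $T$ of $G$, so that the number of non-tree edges is $|E|-|V|+1$. For each vertex $v_i$, I construct the skeleton $P_{S_i}$ of the Seifert piece $S_i$ exactly as in the preceding subsection, making the optimal choice of shifts prescribed by Remark \ref{ottimale}: arrange that exactly $d_i^+$ boundary tori meet $P_{S_i}$ in $\theta_-$ (one for each outgoing edge in $J''$ at $v_i$) and exactly $d_i^-$ meet it in $\theta_+$ (one for each incoming edge), and choose the shifts on the $(1,\pm 1)$-fibres and on the exceptional fibres so as to minimize the true-vertex count. By Remark \ref{ottimale}, this contributes exactly $3(d_i+r_i+2h_i-2)+\sum_{k=1}^{r_i}(S(p_k/q_k)-2)+f_{m_i,M_i}(b_i)$ true vertices.

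Next, I would implement each gluing $A_j$ by a stack of flip-skeletons $P_F$ of Figure \ref{flip} inside a collar $T^2\times[0,1]$: since $A_j$ is normalized with $A_j\ne \pm H$, Lemma \ref{matrice} gives $c_{A_j}=d(A_j\tau_-,\tau_+)=S(\beta_j/\delta_j)-1$, so after orienting so that the two boundary theta graphs meeting the collar are $\theta_-$ and $\theta_+$ respectively, the matrix $A_j$ is realized by $S(\beta_j/\delta_j)-1$ flips, each contributing one true vertex. Assembling the Seifert skeletons along these flip columns via the identity for every tree edge produces a skeleton of the partial graph manifold $M_T$ whose complement is a disjoint union of open balls, with free boundary consisting of the unpaired tori corresponding to the non-tree edges.

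The main obstacle is the closure step: for each of the $|E|-|V|+1$ non-tree edges, identifying a pair of boundary tori of $M_T$ would glue two complementary balls across a torus, turning their union into a solid torus rather than a ball and destroying the spine property. I would repair this by inserting in the corresponding collar a standard closure gadget that realizes the flip sequence for $A_j$ together with a properly embedded compressing disk that cuts the resulting solid torus down to a ball; verifying that this can be arranged at the cost of exactly $5$ additional true vertices per non-tree edge, and that the local surgery is compatible with the assemblings and with the counts of Remark \ref{ottimale}, is the key technical point. Granting this, summing the contributions over the vertices, over all edges, and over the $|E|-|V|+1$ non-tree edges yields the stated bound.
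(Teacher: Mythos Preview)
Your overall strategy matches the paper's exactly: fix a spanning tree $T$, build each $P_{S_i}$ via Remark~\ref{ottimale} with the prescribed $\theta_\pm$ on the boundary tori, connect along tree edges by flip stacks of length $c_{A_j}=S(\beta_j/\delta_j)-1$ using Lemma~\ref{matrice}, and then close up the $|E|-|V|+1$ non-tree edges at an extra cost of $5$ each.

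The one place where your proposal diverges from the paper, and where it is genuinely incomplete, is the closure step. You propose an unspecified ``compressing disk'' gadget and acknowledge that the cost of $5$ is unverified. The paper's mechanism is different and concrete: for a non-tree edge $e_j$, build the flip stack $P_{A_j}$ \emph{one step short}, so that $P_{A_j}\cap(T''_j\times\{1\})=\theta'$ is the theta graph at distance $1$ from $\theta_+$ lying on the geodesic in $\mathbb F^*$ towards $A_j\tau_-$; then, instead of assembling, include the entire torus $T''_j\times\{1\}$ in the polyhedron. Since $\theta'$ and $\theta_+$ differ by a single flip, their geometric intersection is $2$ (Figure~\ref{flip3}), and the graph $\theta'\cup\theta_+$ on the torus has exactly $6$ vertices of degree greater than $2$. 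Hence the added true vertices are $(S(\beta_j/\delta_j)-2)+6=5+(S(\beta_j/\delta_j)-1)$, which is precisely the claimed increment. Your compressing-disk picture is not this construction, and without a specific model you cannot justify the $5$; the torus-insertion together with the one-flip offset is what makes the arithmetic come out.
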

\begin{proof}
Let $T=(V,E_{T})$ be a spanning tree of $G$ and consider  the graph manifold  $M_{T}$  (with boundary if $T\ne G$).
 We will construct a skeleton $P_{M_T}$ for $M_{T}$ by assembling    skeletons   of its Seifert pieces (constructed as above) with 
 skeletons of  thickened tori corresponding to edges of $T$. More precisely, for each $j\in J_T$, let $T'_j\subset\partial S_{i'_j}$ and  $T''_j\subset\partial S_{i''_j}$ be the boundaries attached by $A_j$. We construct a skeleton $P_{A_j}$  for  $T''_{j}\times I$, and assemble $P_{S_{i'_j}}$ with $P_{A_j}$ using  the map $A_j:T'_j\to T''_j=T''_j\times\{0\}$ and $P_{A_j}$ with $P_{S_{i''_j}}$ with the identification  $T''_j\times\{1\}= T''_j$.

Given $j\in J_T$, let  $\theta_j$ be the theta graph corresponding to $A_j\tau_-$. We construct the skeleton $P_{A_j}$  by assembling flip blocks (see Figure \ref{flip}) so that 
 (i) $ P_{A_j}\cap (T''_j\times \{0\})=\theta_j$  and (ii) $ P_{A_j}\cap (T''_j\times \{1\})=\theta_+$. By Lemma \ref{matrice} we have $c_{A_j}=d(A_j\tau_-, \tau_+)=S(\beta_j/\delta_j)-1$, so the number of flip blocks required to construct $P_{A_j}$, as well as the number of true vertices of $P_{A_j}$, is  $S(\beta_j/\delta_j)-1$.

By Remark \ref{ottimale}, we can construct the skeleton    $P_{S_i}$  having \linebreak $3(d_i+r_i+2h_i-2)+\sum_{k=1}^{r_i}(S(p_k/q_k)-2)+f_{m_i,M_i}(b_i)$ true vertices. So   $P_{M_T}$  has $$\sum_{j\in J_T}(S(\beta_j/\delta_j)-1)+\sum_{i\in I}\left(3(d_i+r_i+2h_i-2)+\sum_{k=1}^{r_i}(S(p_k/q_k)-2)+f_{m_i,M_i}(b_i)\right)$$ true vertices.

 For each  $j\in J\setminus J_T$, the matrix  $A_j$  identifies two boundary components of $M_{T}$. Denote with $M_{T\cup e_j}$ the resulting manifold. Construct $P_{A_j}$    such  that 
 \hbox{(i) $ P_{A_j}\cap (T''_j\times \{0\})=\theta_j$}  and (ii) $ P_{A_j}\cap(T''_j\times \{1\})=\theta'$, where $\theta'$ is at distance one from $\theta_+$ and is closer to $\theta_j$ than $\theta_+$. The graphs $\theta'$ and $\theta_+$  differ for a flip, since they correspond to adjacent triangles, and therefore,  as shown in Figure \ref{flip3}, we have $i(\theta',\theta_+)=2$, where $i(\cdot,\cdot)$ denotes the geometric intersection (i.e., the minimum number of intersection points up to isotopy).  Consider the polyhedron $P_{M_T}\cup P_{A_j} \cup (T_j''\times \{1\})$:  it is a skeleton for $P_{M_T\cup e_j}$. Since  $P_{A_j}$  consists of $S(\beta_j/\delta_j)-2$ flip blocks and   the graph $\theta_+\cup\theta'$ has 6 vertices of degree greater than 2,  the new polyhedron has $5+ S(\beta_j/\delta_j)-1$ true vertices more than $P_{M_T}$. By repeating this construction for any $j\in J\setminus J_T$ and observing that  $|J\setminus J_T|=|E\setminus E_T| =|E|-|V|+1$, we get the statement.
\end{proof}

\begin{figure}[h!]                      
\begin{center}                         
\includegraphics[width=4cm]{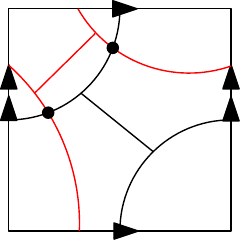}
\caption[legenda elenco figure]{The two intersections of two theta graphs differing for a flip move.}\label{flip3}
\end{center}
\end{figure}

\subsection{An intermediate case}
Now we deal with the class of graph manifolds having a decomposition graph with a spanning tree containing all the edges associated to the matrices $\pm H$. This case seems to be rather technical, but  it is quite interesting since, up to complexity 12, about  $99\%$ of prime  graph manifolds belong to this class (they are exactly  14346 out of 14502). 

Before stating our result we  introduce some notations.
If $J'\ne\emptyset$  let $\Psi=\left\{\psi:J'\to\{+,-\}\right\}$ and set:\footnote{Clearly, $d_i=d^+_i+d^-_i+d^+_{i,\psi}+d^-_{i,\psi}$.}
\begin{itemize}
 \item $d^+_{i,\psi}=\left|\{j\in J'\mid i'_j=i,\psi(j)=+\}\right|+\left|\{j\in J'\mid i''_j=i,\psi(j)=+\}\right|$;
 \item $d^-_{i,\psi}=\left|\{j\in J'\mid i'_j=i,\psi(j)=-\}\right|+\left|\{j\in J'\mid i''_j=i,\psi(j)=-\}\right|$.
\end{itemize}

\begin{teo} \label{tree} Let $M$ be a graph manifold associated to a  decomposition 
graph $G=(V,E,\iota)$. If there exists a spanning tree \hbox{$T\in\mathcal T_G$} of $G$ such that $A_j\neq\pm H$ for any $j\in J\setminus J_T$, 
then
\begin{eqnarray*}
 c(M)&\le& 5(|E|-|V|+1)+\sum_{j\in J''}(S(\beta_j/\delta_j)-1)+\\
  &\ & +\sum_{i\in I}\left(3(d_i+r_i+2h_i-2)+\sum_{k=1}^{r_i}(S(p_k/q_k)-2)\right)+\\
  &\ &+\min_{\psi\in \Psi}\left\{\sum_{i\in I}f_{m_i,M_i}(b_i)\right\},
\end{eqnarray*}
where $m_i=-r_i-h_i-d^-_i-d^-_{i,\psi}+1$ and $M_i=h_i+d^+_i+d^+_{i,\psi}-1$.
\end{teo}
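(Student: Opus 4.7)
The construction mirrors the proof of Theorem~\ref{regular}, the only new ingredient being the treatment of the edges carrying the matrices $\pm H$. Fix a spanning tree $T \in \mathcal T_G$ satisfying $J' \subseteq J_T$ (which exists by hypothesis) and an arbitrary $\psi \in \Psi$. As in Theorem~\ref{regular}, we build a skeleton $P_{M_T}$ for the sub-manifold $M_T$ by assembling Seifert-piece skeletons across the tree edges, then cap off each of the $|E|-|V|+1$ non-tree edges -- all belonging to $J''$ by hypothesis -- and finally optimize by taking the minimum over $\psi$.

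The key remark is that, by Lemma~\ref{matrice}, $c_{\pm H}=0$ and the matrix $\pm H$ fixes both triangles $\tau_{\pm}$. Hence the assembly across each edge $e_j \in J'$ can be performed directly, with no flip-block slab inserted: one only needs the theta graphs on the two tori identified by $A_j$ to be either both $\theta_+$ or both $\theta_-$. We encode the choice by $\psi(j)$, with the convention that $\psi(j)=+$ selects $\theta_-$ on both sides (so the boundary contribution at each endpoint behaves like a ``$d^+$-type'' boundary of Remark~\ref{ottimale}) and $\psi(j)=-$ selects $\theta_+$ (behaving like a ``$d^-$-type'' boundary).

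Construct each Seifert-piece skeleton $P_{S_i}$ as in Remark~\ref{ottimale}, choosing the shifts so that the boundary tori corresponding to outgoing (resp.\ incoming) edges of $v_i$ in $J''$ display $\theta_-$ (resp.\ $\theta_+$), and those corresponding to $\pm H$ edges incident to $v_i$ display $\theta_-$ or $\theta_+$ according to $\psi$. With these choices the total numbers of boundaries of $S_i$ showing $\theta_-$ and $\theta_+$ are respectively $d^+_i + d^+_{i,\psi}$ and $d^-_i + d^-_{i,\psi}$, so Remark~\ref{ottimale} gives
\[
 3(d_i+r_i+2h_i-2)+\sum_{k=1}^{r_i}(S(p_k/q_k)-2)+f_{m_i,M_i}(b_i)
\]
true vertices in $P_{S_i}$, with $m_i$ and $M_i$ as in the statement. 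Assembling across each $e_j \in J''_T$ contributes a further $S(\beta_j/\delta_j)-1$ true vertices from a flip-block slab connecting $A_j\tau_-$ to $\tau_+$, exactly as in Theorem~\ref{regular}, while assembling across each $e_j \in J'_T = J'$ contributes none. Closing off each non-tree edge $e_j \in J \setminus J_T$ by the truncated-slab-plus-torus trick of Theorem~\ref{regular} contributes $5 + S(\beta_j/\delta_j)-1$ additional true vertices. Summing, using $|J \setminus J_T| = |E|-|V|+1$ and $J'' = J''_T \sqcup (J \setminus J_T)$, and finally minimizing over $\psi \in \Psi$, yields the claimed bound.

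The only substantive point to check -- and the sole departure from Theorem~\ref{regular} -- is the ``no flip-block'' assembly across a $\pm H$ edge: one must verify that the matched pair of theta graphs on the shared torus does not introduce $K_4$-vertices beyond those already accounted for in the respective Seifert-piece skeletons of Remark~\ref{ottimale}. This follows from the same local analysis underlying the flip-block construction, applied now to a trivial (zero-flip) slab, since $\pm H$ identifies $\tau_{\pm}$ with itself vertex-by-vertex.
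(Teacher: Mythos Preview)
Your proposal is correct and follows essentially the same route as the paper's own proof: you fix a spanning tree containing all $\pm H$-edges, use Lemma~\ref{matrice} to observe that those edges require no flip blocks and hence only the coherent choice $\theta_+/\theta_+$ or $\theta_-/\theta_-$ encoded by $\psi$, invoke Remark~\ref{ottimale} with $d^+=d^+_i+d^+_{i,\psi}$ and $d^-=d^-_i+d^-_{i,\psi}$ to count the true vertices of each $P_{S_i}$, and then close the non-tree edges exactly as in Theorem~\ref{regular}. Your final paragraph, making explicit that a zero-flip assembly across a $\pm H$-edge creates no new true vertices because the matched theta graphs coincide, spells out a point the paper leaves implicit in the definition of assembling.
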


\begin{proof}
As in the  proof of Theorem  \ref{regular}, we start by constructing a skeleton $P_{M_T}$ for the graph manifold   $M_{T}$  (with boundary if $T\ne G$). 
 By Lemma \ref{matrice}  we have $0=c_{\pm H}=d(\pm H\tau_-, \tau_-)=d(\pm H\tau_+, \tau_+)$, so whenever  $j\in J'_T=J'$, no flip block is required in $P_{A_j}$ and  we can  assemble directly  $P_{S_{i'_j}}$ with  $P_{S_{i''_j}}$. So,
 if $T_j'$ and $T_j''$ denote, respectively, the boundary components of  $S_{i'_j}$ and $S_{i''_j}$  glued by $A_j$, we require  that either (i)  $P_{S_{i'_j}}\cap T_j'=\theta_+$  and \hbox{$P_{S_{i''_j}}\cap T_j''=\theta_+$} or (ii) $P_{S_{i'_j}}\cap T_j'=\theta_-$  and $P_{S_{i''_j}}\cap T_j''=\theta_-$. In order to take care of these two possibilities we use a function $\psi:J'\to\{+,-\}$ of the  edges of $E'$.  If the shifts in the construction of $P_{S_{i'_j}}$ and $P_{S_{i''_j}}$ are chosen so that (i) holds (resp. (ii) holds) set $\psi(j)=-$ (resp.  $\psi(j)=+$). Following the construction of   Remark  \ref{ottimale}, with $d^+=d^+_i+d^+_{i,\psi}$ and $d^-=d^-_i+d^-_{i,\psi}$,  we obtain a  skeleton  $P_{S_{i}}$ with $3(d_i+r_i+2h_i-2)+\sum_{k=1}^{r_i}(S(p_k/q_k)-2)+f_{m_i,M_i}(b_i)$ true vertices.  As a consequence, the minimum  number of true vertices of the skeleton $P_{M_T}$ of $M_T$ is $\sum_{j\in J''_T}(S(\beta_j/\delta_j)-1)
   +\sum_{i\in I}\left(3(d_i+r_i+2h_i-2)+\sum_{k=1}^{r_i}(S(p_k/q_k)-2)\right)+$ \linebreak  $+\min_{\psi\in \Psi}\left\{\sum_{i\in I}f_{m_i,M_i}(b_i)\right\}$.
   
  Since all the matrices associated to the edges $e_j\notin E_T$ are different from $\pm H$,   starting from  $P_{M_T}$   we can construct a   spine for $M$ as described in the  proof of Theorem  \ref{regular}. This concludes the proof.
 \end{proof}

\subsection{The general case}

For the general case we need to consider a certain set of spanning trees of $G$.

 Let  $\phi:{\mathcal T}_G\to\N$  be the function defined by $\phi(T)=|(E-E_T)\cap E'|$. A spanning tree 
$T\in {\mathcal T}_G$ is called {\it optimal} if $\phi(T)\le \phi (T')$ for any $T'\in {\cal T}_G$. 
Denote the set of the optimal spanning trees of $G$ with  ${\cal O}_G$  and  
set \hbox{$\Phi(G)=\min\{\phi(T)\mid T\in {\cal T}_G\}$} (i.e., $\Phi(G)=\phi(T)$, for any $T\in {\cal O}_G$).
Finally, let  
$$ \Psi_T=\{\psi:J_T'\to\{+,-\}\},\quad
\Psi'_T=\{\psi':J'\setminus J'_T\to\{++,+,+-,-+,-,--\}\}
$$
and set:
\begin{itemize}
\item $d^+_{i,\psi,T}=\left|\{j\in J'_T\mid i'_j=i,\psi(j)=+\}\right|+\left|\{j\in J'_T\mid i''_j=i,\psi(j)=+\}\right|$;
\item $d^-_{i,\psi,T}=\left|\{j\in J'_T\mid i'_j=i,\psi(j)=-\}\right|+\left|\{j\in J'_T\mid i''_j=i,\psi(j)=-\}\right|$;
\item $d^+_{i,\psi',T}=2\left|\{j\in J'-J'_T\mid i'_j=i,\psi'(j)=++\}\right|+\left|\{j\in J'-J'_T\mid i'_j=i,\psi'(j)=+\}\right|+
  +\left|\{j\in J'-J'_T\mid i''_j=i,\psi'(j)=++\}\right|+2\left|\{j\in J'-J'_T\mid i''_j=i,\psi'(j)=+\}\right|+ +
 \left|\{j\in J'-J'_T\mid i'_j=i,\psi'(j)=+-\}\right| + \left|\{j\in J'-J'_T\mid i''_j=i,\psi'(j)=-+\}\right|$;
 \item $d^-_{i,\psi',T}=2\left|\{j\in J'-J'_T\mid i'_j=i,\psi'(j)=--\}\right|+\left|\{j\in J'-J'_T\mid i'_j=i,\psi'(j)=-\}\right| + 
 +\left|\{j\in J'-J'_T\mid i''_j=i,\psi'(j)=--\}\right|+2\left|\{j\in J'-J'_T\mid i''_j=i,\psi'(j)=-\}\right|+ + \left|\{j\in J'-J'_T\mid i'_j=i,\psi'(j)=-+\}\right| + \left|\{j\in J'-J'_T\mid i''_j=i,\psi'(j)=+-\}\right|$.
\end{itemize}

We are now ready to state the general case.

 \begin{teo}  \label{generale} Let $M$ be a graph manifold associated to a  decomposition graph $G=(V,E,\iota)$. Then
\begin{eqnarray*}
 c(M)&\le& 5(|E|-|V|+1)+\Phi(G)+\sum_{j\in J''}(S(\beta_j/\delta_j)-1)+\\
 &\ & +\sum_{i\in I}\left(3(d_i+r_i+2h_i-2)+\sum_{k=1}^{r_i}(S(p_k/q_k)-2)\right)+\\
 &\ &+ \min_{T\in {\cal O}_G}\left\{ \min_{\psi\in \Psi_T,\psi'\in \Psi'_T}\{\sum_{i\in I}f_{m_i,M_i}(b_i)\}\right\},
\end{eqnarray*}
where $m_i=-r_i-h_i-d^-_i-d^-_{i,\psi,T}-d^-_{i,\psi',T}+1$ and $M_i=h_i+d^+_i+d^+_{i,\psi,T}+d^+_{i,\psi',T}-1$. 
\end{teo}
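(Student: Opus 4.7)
The plan is to combine the constructions of Theorems~\ref{regular} and~\ref{tree}, adding an extra layer of flexibility to account for the $\pm H$ edges that are forced to lie outside every spanning tree of $G$. I would fix an optimal spanning tree $T\in\mathcal{O}_G$, so that $|J'\setminus J'_T|=\Phi(G)$, and produce a spine of $M$ whose number of true vertices realizes the displayed upper bound for a suitable choice of $(T,\psi,\psi')$.

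For the tree part I build a skeleton $P_{M_T}$ of $M_T$ exactly as in the proof of Theorem~\ref{tree}. For every edge $j\in J_T$ with $A_j\neq\pm H$ I insert a thickened-torus skeleton $P_{A_j}$ with $c_{A_j}=S(\beta_j/\delta_j)-1$ flip blocks, joining a boundary theta graph $\theta_j$ associated to $A_j\tau_-$ to $\theta_+$. For every edge $j\in J'_T$, the identities $\pm H\,\tau_{\pm}=\tau_{\pm}$ allow a direct assembling of $P_{S_{i'_j}}$ and $P_{S_{i''_j}}$ with either $(\theta_+,\theta_+)$ or $(\theta_-,\theta_-)$ as the pair of boundary intersections, a choice recorded by $\psi(j)\in\{+,-\}$. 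Following Remark~\ref{ottimale} with the effective parameters $d^\pm_i+d^\pm_{i,\psi,T}$ in place of $d^\pm_i$, each Seifert piece then contributes $3(d_i+r_i+2h_i-2)+\sum_{k=1}^{r_i}(S(p_k/q_k)-2)+f_{m_i,M_i}(b_i)$ true vertices.

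Next I close up the non-tree edges $J\setminus J_T$. For $j$ with $A_j\neq\pm H$ I reproduce verbatim the Theorem~\ref{regular} construction: insert $P_{A_j}$ with $S(\beta_j/\delta_j)-2$ flip blocks going from $\theta_j$ to a theta $\theta'$ adjacent to $\theta_+$, and cap with $T''_j\times\{1\}$; the theta-on-theta pattern of Figure~\ref{flip3} contributes $5$ true vertices and the interior flips contribute $S(\beta_j/\delta_j)-1$ more. For each of the $\Phi(G)$ non-tree edges with $A_j=\pm H$ the analogous construction still cuts the handle by inserting $T''_j\times\{1\}$, but no interior flip block is needed since $\pm H\,\tau_{\pm}=\tau_{\pm}$; the two theta graphs meeting on the inserted torus still differ by a single flip, producing $5+1=6$ extra true vertices. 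Summed over all non-tree edges this yields the contribution $5(|E|-|V|+1)+\Phi(G)+\sum_{j\in J''\setminus J''_T}(S(\beta_j/\delta_j)-1)$, which combined with the tree terms gives the two displayed sums.

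The remaining freedom is encoded by $\psi'\in\Psi'_T$: for each non-tree $\pm H$ edge there are six admissible local patterns for the triple (boundary theta on $S_{i'_j}$, boundary theta on $S_{i''_j}$, theta on the separating torus), which I match to the symbols $\{++,+,+-,-+,-,--\}$, with a single $+$ or $-$ recording two matching theta graphs on the two endpoints, a mixed symbol recording an asymmetric pair, and a doubled sign recording the case in which the separating torus itself plays a double boundary role on the indicated side. Substituting the resulting $d^\pm_{i,\psi',T}$ into Remark~\ref{ottimale} gives the $m_i,M_i$ of the statement, and minimizing $\sum_{i}f_{m_i,M_i}(b_i)$ over $T\in\mathcal{O}_G$, $\psi\in\Psi_T$, $\psi'\in\Psi'_T$ produces the last summand. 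The main obstacle will be the verification that the six symbols of $\Psi'_T$ really exhaust the useful configurations and that the coefficients $1$ versus $2$ in the definition of $d^\pm_{i,\psi',T}$ correctly match the shift choices imposed by Figures~\ref{C},~\ref{C2},~\ref{negativo} and~\ref{positivo} on the augmented Seifert pieces; once this case analysis is complete, the inequality follows by summing local contributions and minimizing.
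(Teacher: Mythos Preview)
Your overall plan matches the paper's: fix $T\in\mathcal O_G$, build $P_{M_T}$ as in Theorem~\ref{tree}, close up $j\in J''\setminus J''_T$ as in Theorem~\ref{regular}, and for each $j\in J'\setminus J'_T$ simply cap with the torus $T''_j$, which carries $6$ true vertices. The counting $5(|E|-|V|+1)+\Phi(G)+\dots$ then drops out exactly as you wrote.

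The genuine gap is in your treatment of the six symbols of $\Psi'_T$. You describe them as patterns for a \emph{triple} $(\theta'_j,\theta''_j,\text{theta on the inserted torus})$ and assert that ``the two theta graphs meeting on the inserted torus still differ by a single flip''. This fails precisely in the cases that force the coefficients $2$: since $\pm H\,\tau_\pm=\tau_\pm$, if $(\theta'_j,\theta''_j)=(\theta_+,\theta_+)$ or $(\theta_-,\theta_-)$ the two thetas on $T''_j$ \emph{coincide}, and no capping torus produces the required six true vertices. The paper resolves this not by introducing a third theta, but by invoking Remark~\ref{regular_matrices}: one replaces $A_j$ by $A_jU^{\pm1}$ or by $U^{\pm1}A_j$, which shifts $b_{i'_j}$ or $b_{i''_j}$ by $\pm1$ while leaving the graph manifold unchanged. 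After this replacement the image triangle $A_jU^{\pm1}\tau_\pm$ (resp.\ $U^{\pm1}A_j\tau_\pm$) is \emph{adjacent} to $\tau_\pm$, so the two thetas on $T''_j$ now differ by one flip and the torus contributes $6$ vertices as needed.

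This is exactly what the coefficients $1$ versus $2$ in $d^\pm_{i,\psi',T}$ encode. For instance $\psi'(j)=++$ means $(\theta'_j,\theta''_j)=(\theta_-,\theta_-)$ together with the replacement $A_j\to A_jU^{-1}$, hence $b_{i'_j}\to b_{i'_j}-1$; both $\theta_-$'s count toward $d^+$ (one unit each at $i'_j$ and $i''_j$), and the extra unit at $i'_j$ absorbs the shift in $b_{i'_j}$ into $M_{i'_j}$. Similarly $\psi'(j)=+$ records the same pair of thetas but the replacement $A_j\to UA_j$ (shift at $i''_j$), while $+-$ and $-+$ are the mixed pairs $(\theta_-,\theta_+)$ and $(\theta_+,\theta_-)$ requiring no replacement. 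Once you substitute this mechanism for your ``double boundary role'' reading, the case analysis you flagged as the main obstacle becomes immediate and the bound follows.
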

\begin{proof}
Let $T=(V,E_T)\in\mathcal O_G$. Given  $\psi\in\Psi_T$, we construct a skeleton $P_{M_T}$ for $M_T$ as described in the proof of Theorem \ref{tree}. If $j\in J'\setminus J'_T$, then $A_j=\pm H$ and it glues together  two toric boundary components $T_j'\subset\partial S_{i'_j}$ and $T_j''\subset\partial S_{i''_j}$ of   $M_T$.

Let $(\theta'_j,\theta''_j)=(P_{S_{i'_j}}\cap T'_j,P_{S_{i''_j}}\cap T''_j)$. Since $A_j\Delta_{\pm}=\Delta_{\pm}$ and $i(\theta_+,\theta_+)=i(\theta_-,\theta_+)=i(\theta_-,\theta_-)=2$, we have to consider all  possible cases of \hbox{$\theta_j',\theta_j''\in\{\theta_+,\theta_-\}$}.  If  $(\theta_j',\theta_j'')=(\theta_+,\theta_-)$ (resp.  $(\theta_j',\theta_j'')=(\theta_-,\theta_+)$) then we define $\psi'(j)=-+$ (resp. $\psi'(j)=+-$). If $(\theta_j',\theta_j'')=(\theta_+,\theta_+)$ or   $(\theta_j',\theta_j'')=(\theta_-,\theta_-)$ we can use Remark \ref{regular_matrices} in order to obtain a better estimate for $c(M)$. Indeed, if we replace the matrix $A_j$  with $A_j U^{\mp 1}$ (resp. $U^{\pm}A_j$), then  $b_{i'_j}$ (resp. $b_{i''_j}$) is replaced with $b_{i'_j}\mp 1$ (resp. $b_{i''_j}\mp 1$), but anyway  $i(\theta'_-,\theta_-)=i(\theta''_-,\theta_-)=i(\theta'_+,\theta_+)=i(\theta''_+,\theta_+)=2$, where $\theta'_-,\theta''_-,\theta'_+,\theta_+''$ are the theta graphs associated to the triangles  $A_jU^{-1}\Delta_-, UA_j\Delta_-, A_jU\Delta_+, U^{-1}A_j\Delta_+$, respectively. Indeed, $A_jU^{-1}\Delta_-, UA_j\Delta_-$ are adjacent to $\Delta_-$ and $A_jU\Delta_+, U^{-1}A_j\Delta_+$ are adjacent to $\Delta_+$. More precisely, if we take $(\theta_j',\theta_j'')=(\theta_-,\theta_-)$ and  replace $A_j$ with $A_j U^{-1}$ (resp. $UA_j$) we set   $\psi'(j)=++$ (resp. $\psi'(j)=+$), while if we take \hbox{$(\theta_j',\theta_j'')=(\theta_+,\theta_+)$} and  replace $A_j$ with $A_jU$ (resp. $U^{-1}A_j$) we set $\psi'(j)=--$ (resp. $\psi'(j)=-$). As a result, the skeleton $P_{M_T}$ determined by $\psi\in \Psi_T$ and $\psi'\in\Psi'_T$ has    $$\sum_{j\in J_T''} S(\beta_j/\delta_j)-1)+\sum_{i\in I}\left(3(d_i+r_i+2h_i-2)+\sum_{k=1}^{r_i}(S(p_k/q_k)-2)\right)+\sum_{i\in I}f_{m_i,M_i}(b_i)$$ true vertices.

A spine for $M$ is given by the union of $P_{M_T}$ with: (i) the skeleton $P_{A_j}\cup T_j''\times\{1\}$ described in  the  proof of Theorem \ref{regular} and having $5+(S(\beta_j/\delta_j)-1)$ true vertices   for each $j\in J''\setminus J''_T$, and  (ii) the torus $T_ j''$, containing $6$ true vertices for each $j\in J'\setminus J'_T$. Since  $|J'\setminus J'_T| + |J''\setminus J''_T| = |E\setminus E_T|=|E|-|V|+1$ and $|J'\setminus J'_T|=\phi(G)$ we get the statement. 
\end{proof}

The sharpness of the  previous upper bound  in all  known cases justifies the following:

\begin{con} The upper bound given in Theorem \ref{generale}  is sharp for all closed connected  orientable prime graph manifolds.

\end{con}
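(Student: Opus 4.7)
Plan. Since Theorem \ref{generale} already establishes the upper bound, the conjecture is equivalent to the matching lower bound
\[
c(M)\ \geq\ 5(|E|-|V|+1)+\Phi(G)+\sum_{j\in J''}(S(\beta_j/\delta_j)-1)+\sum_{i\in I}\bigl(3(d_i+r_i+2h_i-2)+\textstyle\sum_{k=1}^{r_i}(S(p_k/q_k)-2)\bigr)+\Omega(G),
\]
where $\Omega(G)$ denotes the double minimum over $T,\psi,\psi'$. My proposal is to start from an almost simple spine $P$ of $M$ realizing $c(M)$ and read off a decomposition of its true vertices along the JSJ tori that reproduces the RHS term by term.

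Step 1 (normal form along JSJ tori). Using Matveev's moves and the incompressibility of the JSJ tori of $M$, I would first show that $P$ can be modified, without increasing the number of true vertices, so that it meets each JSJ torus $T_j$ transversely in a non-trivial theta graph $\theta_j$. This is the analogue of the ``straightening'' step used in \cite{MP2} for torus bundles, and it reduces the analysis to the study of the restrictions $P_i:=P\cap S_i$, which are skeletons of the Seifert pieces with prescribed boundary theta graphs $\{\theta_j\}$.

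Step 2 (lower bound inside each Seifert piece). For each $i\in I$, I would prove a sharp lower bound
\[
\#\{\text{true vertices of }P_i\}\ \geq\ 3(d_i+r_i+2h_i-2)+\sum_{k=1}^{r_i}(S(p_k/q_k)-2)+\tilde f(b_i,\boldsymbol\theta),
\]
where $\tilde f$ depends on how many boundary theta graphs equal $\theta_+$ versus $\theta_-$. The first two summands should come from a fibrewise argument counting true vertices forced by the exceptional fibres (each $(p_k,q_k)$ requiring $S(p_k/q_k)-2$ flip blocks via Lemma \ref{matrice}) and by the genus/boundary topology $(d_i+r_i+2h_i-2)$; the third from the optimization between $\theta_+$ and $\theta_-$ boundary patterns encoded by $f_{m_i,M_i}$.

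Step 3 (lower bound on gluing cylinders). Between two Seifert pieces glued by $A_j$, the boundary theta graphs $\theta'_j,\theta''_j$ induce two triangles in the Farey complex, and $A_j$ must carry one to the other. The complexity $c_{A_j}$ of Lemma \ref{matrice} then forces at least $S(\beta_j/\delta_j)-1$ flip blocks (hence true vertices) along the collar of $T_j$ when $A_j\neq\pm H$. For edges in $J'$ outside a chosen spanning tree $T$, the gluing matrix is $\pm H$ but the torus must still appear in $P$, contributing the $\Phi(G)$ summand (at least one true vertex per such torus, optimised via the $6$-vertex configuration of Figure \ref{flip3}).

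Step 4 (cycle correction and optimization). For each non-tree edge $e_j\in E\setminus E_T$ with $A_j\neq\pm H$, a local analysis of how a spine closes up a one-cycle around $T_j$ should yield the universal surplus of $5$ true vertices per non-tree cycle, matching $5(|E|-|V|+1)$. Finally, since $T,\psi,\psi'$ were arbitrary, taking the infimum recovers $\Omega(G)$. The main obstacle is Step 2: even for a single orientable Seifert fibre space with boundary, a sharp lower bound for complexity is an open problem; the only sharp results to date are for lens spaces \cite{JRT,JRT2} and isolated families. A realistic attack would combine the homological techniques of Jaco--Rubinstein--Tillmann with a relative version adapted to boundary theta graphs, and then prove a ``JSJ-additivity for complexity'' result strong enough to glue the piecewise lower bounds together. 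Empirical evidence for $c(M)\leq 12$ (all $14502$ catalogued cases) suggests this strategy is the right one, but the general JSJ-additivity along tori seems to be the decisive technical hurdle.
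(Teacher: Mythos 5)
There is a genuine gap --- in fact the statement you are addressing is stated in the paper as a \emph{conjecture}, supported only by the computational evidence of the $14502$ manifolds in the Recognizer catalogue, and your proposal does not close any of the openings that make it a conjecture. Your own Step 2 concedes the central difficulty: no sharp lower bound for the complexity of a single orientable Seifert fibre space (even closed, let alone with prescribed boundary theta graphs) is known, so the ``relative'' lower bound you posit for each piece $P_i$ is itself at least as hard as the open problems you cite. What you write is a plausible research program, not a proof.

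Beyond Step 2, the other steps also rest on unproved assertions. Step 1 assumes that a minimal almost simple spine can be isotoped/modified, without increasing the number of true vertices, so that it meets every JSJ torus in a non-trivial theta graph; no such normalization theorem is available --- almost simple spines of minimal complexity need not be in any controlled position with respect to incompressible tori, and complexity is only known to be additive under connected sum, not along toric splittings. Consequently the decomposition $P=\bigcup_i P_i$ on which Steps 2--4 rely may simply not exist for a minimal spine, and the per-edge counts of Step 3 (the $S(\beta_j/\delta_j)-1$ flip blocks forced by $c_{A_j}$) and the universal surplus of $5$ true vertices per non-tree edge in Step 4 are read off from the paper's \emph{construction} of an upper-bound spine; turning them into lower bounds requires exactly the ``JSJ-additivity for complexity'' you flag as the decisive hurdle. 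So the proposal identifies the right obstacles but proves none of them, and the conjecture remains open.
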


\footnotesize

\bigskip

\begin{flushleft}


\vbox{

Alessia~CATTABRIGA\\
Department of Mathematics, University of Bologna\\
Piazza di Porta San Donato 5, 40126 Bologna, ITALY\\
e-mail: \texttt{alessia.cattabriga@unibo.it}\\}

\bigskip

Michele~MULAZZANI\\
Department of Mathematics and ARCES, University of Bologna\\
Piazza di Porta San Donato 5, 40126 Bologna, ITALY\\
e-mail: \texttt{michele.mulazzani@unibo.it}\\

\end{flushleft}

\end{document}